\newcommand{\cal}{\mathcal}
\numberwithin{equation}{section}
\newtheorem{theorem}{Theorem}
\newtheorem{corollary}[theorem]{Corollary}
\newtheorem{lemma}[theorem]{Lemma}
\newtheorem{corr}[theorem]{Corollary}
\newtheorem{conjecture}[theorem]{Conjecture}
\newtheorem{proposition}[theorem]{Proposition}
\newtheorem{deff}[theorem]{Definition}
\newcommand{\bth}{\begin{theorem}}
	\newcommand{\ble}{\begin{lemma}}
		\newcommand{\bcor}{\begin{corr}}
			\newcommand{\bdeff}{\begin{deff}}
				\newcommand{\bprop}{\begin{proposition}}
					\newcommand{\ele}{\end{lemma}}
				\newcommand{\ecor}{\end{corr}}
			\newcommand{\edeff}{\end{deff}}
		\newcommand{\eprop}{\end{proposition}}
	\newcommand{\la}{\lambda}
	\newcommand{\eps}{\varepsilon}
	\newcommand{\e}{\varepsilon}
	\renewcommand{\Pi}{\varPi}
	\newcommand{\Rt}{{\Bbb R}^2}
	\newcommand{\tidle}{\tilde}
	\newcommand{\R}{{\mathbb R}}
	\newcommand{\tg}{\tilde g}
	\newcommand{\dgt}{d_{\tilde g}}
	\newcommand{\sqrtg}{\sqrt{-\Delta_g}}
	\newcommand{\sqrtd}{\sqrt{-\Delta_{\tilde g}}}
	\newcommand{\Pe}{\sqrt{-\Delta}}
	\thanks{ }
\begin{document}

		\title[Improved Generalized Periods Estimates]
		{Improved Generalized Periods estimates on  Riemannian 
			Surfaces with Nonpositive Curvature}

		%
		%
		%
		%
		%
		\begin{abstract}  
			We show that on compact Riemann surfaces of negative curvature, the generalized periods, i.e. the $\nu$-th order Fourier coefficient of eigenfunctions $e_\la$ over a period geodesic $\gamma$ goes to 0 at the rate of $O((\log\la)^{-1/2})$, if $0<\nu<c_0\la$, given any $0<c_0<1$.  No such result is possible for the sphere $S^2$ or the flat torus $\mathbb T^2$.   Combined with the quantum ergodic restriction result of Toth and Zelditch \cite{TZ}, our results imply that for a generic closed geodesic $\gamma$ on a compact hyperbolic surface, the restriction $e_{\la_j}|_\gamma$ of an orthonormal basis $\{e_{\la_j}\}$ has a full density subsequence that goes to zero weakly in $L^2(\gamma)$. Our proof consists of a further refinement of a recent paper by Sogge, Xi and Zhang \cite{Gauss} on the geodesic period integrals ($\nu=0$), which featured the Gauss-Bonnet Theorem as a key quantitative tool to  avoid geodesic rectangles on the universal cover of $M$. In contrast, we shall employ the Gauss-Bonnet Theorem to quantitatively avoid geodesic parallelograms. As in \cite{Gauss}, the use of Gauss-Bonnet also enables us to weaken our curvature condition, by allowing the curvature  to vanish at an averaged	rate of finite type. 
		\end{abstract}
		\keywords{Eigenfunction Estimates, Generalized Periods, Negative Curvature}
		\author{Yakun Xi}
		\address{Department of Mathematics, University of Rochester, Rochester, NY 14627}
		\email{yxi4@math.rochester.edu}
		

		\maketitle
		
		\section{Introduction}
		Let $e_\la$ denote the $L^2$-normalized eigenfunction on a compact, boundary-less Riemannian surface $(M,g)$, i.e., 
		$$-\Delta_g e_\la=\la^2 e_\la, \quad \text{and } \, \, \int_M |e_\la|^2 \, dV_g=1.$$
		Here $\Delta_g$ denotes the Laplace-Beltrami operator on $(M,g)$ and $dV_g$  is the volume element associated with metric $g$.
		
		It is an area of interest in both number theory and harmonic analysis to study various quantitative behaviors of the eigenfunctions restricted to certain smooth curve, among which the geodesic period integral of eigenfunctions on compact hyperbolic surfaces have been studied intensively due to its significance in number theory.

		Using the Kuznecov formula, Good \cite{Good} and Hejhal \cite{Hej} independently showed that if $\gamma_{per}$ is a periodic geodesic on a compact hyperbolic
		surface $M$ parametrized by arc-length, then, uniformly in $\la$,
		\begin{equation}\label{i.1}
		\Bigl|\, \int_{\gamma_{per}} e_\la \, ds\, \Bigr| \le C_{\gamma_{per}}.
		\end{equation}
		This result was later generalized by Zelditch~\cite{ZelK}, who showed that if $\la_j$ are the eigenvalues of $\sqrt{-\Delta_g}$ for an orthonormal basis of eigenfunctions $e_{\la_j}$ on
		a compact Riemannian surface, and if $p_j(\gamma_{per})$ denote the period integrals  of $e_{\la_j}$ as in \eqref{i.1},
		then
		\begin{equation}\label{Zel}\sum_{\la_j\le \la}|p_j(\gamma_{per})|^2 =c_{\gamma_{per}} \la +O(1),\end{equation}
		which is stronger than \eqref{i.1}.  Further work for hyperbolic surfaces giving more information about the lower order remainder in terms
		of geometric data of $\gamma_{per}$ was done by Pitt~\cite{Pitt}.  Since, by Weyl's Law, the number of eigenvalues (counting multiplicities) that are smaller than
		$\la$ is about $\la^2$, \eqref{Zel} implies that, at least on average, one can do much better than \eqref{i.1}.  The problem
		of improving this upper bound for hyperbolic surfaces was raised and discussed in Pitt~\cite{Pitt} and Reznikov~\cite{Rez}. Indeed, it was observed in \cite{CSPer} that \eqref{i.1} is sharp on compact
		Riemannian surfaces of constant non-negative curvature.  On the round sphere $S^2$, the integrals in \eqref{i.1} have unit size if
		$\gamma_{per}$ is the equator and $e_\la$ is an $L^2$-normalized zonal spherical harmonic of even degree.  Also on the flat torus ${\mathbb T}^2$, for every
		periodic geodesic, $\gamma_{per}$, one can find a sequence of eigenvalues $\la_k$ and eigenfunctions $e_{\la_k}$ so that
		$e_{\la_k}\equiv 1$ on $\gamma_{per}$ and $\|e_{\la_k}\|_{L^2({\mathbb T}^2)}\approx 1$.  In contrast, as an analogy with the Lindel\"of conjecture for certain $L$-functions, it is conjectured by Reznikov that the geodesic period integrals on a compact hyperbolic surface satisfy the following:
		\begin{conjecture}[\cite{Rez}]Let $\gamma_{per}$ be a periodic geodesic on a compact hyperbolic surface $(M,g)$. Then given $\eps>0$, there exists a constant $C_\eps$ depending on $\eps$, $M$ and the length of $\gamma_{per}$, such that 
				\begin{equation}	\Bigl|\, \int_{\gamma_{per}} e_\la \, ds\, \Bigr| \le C_\eps\la^{-\frac12+\eps}.
\end{equation}		
	
	\end{conjecture}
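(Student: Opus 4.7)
The plan is to obtain genuine polynomial decay $\lambda^{-1/2+\epsilon}$ rather than the logarithmic savings of \cite{Gauss}, by combining representation-theoretic factorization, amplification, and quantitative mixing of the geodesic flow.

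First, I would lift $e_\lambda$ to the unit cotangent bundle $S^*M \cong \Gamma\backslash PSL_2(\mathbb{R})$ and expand in terms of automorphic representations. For a hyperbolic surface of constant curvature $-1$, $e_\lambda$ sits as a $K$-spherical vector in an irreducible principal series representation $\pi_\lambda$ of spectral parameter $i\lambda$. A Bernstein--Reznikov style analysis factors the geodesic period $p_\lambda(\gamma_{per})$ as a product of an archimedean integral of size $\lambda^{-1/2}$ (stationary phase against a Whittaker-type special function) and an automorphic pairing $\langle e_\lambda,\delta_{\gamma_{per}}\rangle$ with the distribution supported on the geodesic subgroup $A\subset PSL_2(\mathbb{R})$ fixing $\gamma_{per}$. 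The conjecture thus reduces to showing that this automorphic pairing grows no faster than $\lambda^\epsilon$.

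Second, I would introduce an amplifier $\mathcal{A}_N$ of length $N=\lambda^\delta$ for small $\delta>0$. In the arithmetic setting this is a Hecke polynomial in the style of Iwaniec--Sarnak; in the general setting I would use short averages of the geodesic flow $\varphi_t$, or commuting pseudodifferential operators with symbol concentrated in a $\lambda^{-\delta}$-neighborhood of the unit cosphere, so that the operator acts on $e_\lambda$ by a scalar $\mu_N(\lambda)$ of size $\gg N^{1-\epsilon}$. Coupling this with a pre-trace formula of frequency window $T=c\log\lambda$ expresses $|\mu_N(\lambda)|^2|p_\lambda(\gamma_{per})|^2$ as a diagonal term of size $\lambda^{-1}N^{2-2\epsilon}|\gamma_{per}|$ plus an off-diagonal sum of oscillatory double integrals
\[
\sum_{g\in\Gamma,\,\ell(g)\le T} c_g \int_{\gamma_{per}}\!\int_{\gamma_{per}} e^{i\lambda\, d(x,gy)}\, a(x,y;g)\, ds(x)\, ds(y),
\]
with weights $c_g$ bounded by $N^{2-2\epsilon}$ coming from the amplifier structure.

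Third---and this is where the decisive work lies---I would bound the off-diagonal sum by a factor $\lambda^{-1/2+\epsilon}$ beyond the trivial triangle-inequality estimate. The input would be exponential mixing of the Anosov geodesic flow (Dolgopyat--Ratner), which quantifies the equidistribution of pairs $(\gamma_{per},g\gamma_{per})$ as $\ell(g)\to\infty$, combined with non-degeneracy of the mixed Hessian of the phase $d(x,gy)$ on $\gamma_{per}\times\gamma_{per}$ away from the tangential parallelogram configurations ruled out by the Gauss--Bonnet analysis of the present paper. After a dyadic decomposition in $\ell(g)$ and integration by parts in the $\lambda$-oscillation, the polynomial equidistribution rate should transfer to the oscillatory sum, yielding the required $\lambda^{-1/2+\epsilon}$ gain; one then optimizes in $\delta$ to conclude.

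The hard part is precisely Step 3: converting \emph{measure-theoretic} exponential mixing into a genuine power-of-$\lambda$ saving in the \emph{oscillatory} lattice-point sum above. The Gauss--Bonnet parallelogram arguments employed here merely count the problematic geodesic configurations and therefore yield a single logarithm; upgrading the count to a polynomially effective equidistribution estimate along geodesic parallelograms is, in the arithmetic setting, essentially equivalent to a Burgess- or Conrey--Iwaniec-type subconvex bound for a Rankin--Selberg $L$-function attached to $\pi_\lambda$ and the theta series of $A$, and would entail genuinely new harmonic-analytic input beyond the curvature-only methods of \cite{Gauss}.
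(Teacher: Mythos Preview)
The statement you are attempting to prove is a \emph{conjecture} (due to Reznikov), not a theorem of the paper. The paper explicitly states that this conjecture ``seems far beyond with current techniques'' and proves instead only the logarithmic improvement $(\log\lambda)^{-1/2}$ of Theorem~\ref{main}. There is therefore no proof in the paper to compare against; your task was to recognize this and not to manufacture an argument.

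That said, your proposal is not a proof but an outline with a decisive gap, which you yourself identify. Steps 1 and 2 are reasonable heuristics in the arithmetic setting, but Step 3 is the entire content of the conjecture: converting exponential mixing of the geodesic flow into a genuine power saving in the off-diagonal oscillatory sum is, as you note, essentially equivalent to a subconvexity bound for the relevant Rankin--Selberg $L$-function. No general mechanism is known for this conversion; Dolgopyat-type mixing gives decay in a \emph{time} parameter, not in the \emph{spectral} parameter $\lambda$, and the passage between the two is exactly what is missing. Moreover, outside the arithmetic setting there is no Hecke amplifier, and the pseudodifferential substitute you propose does not produce the large scalar $\mu_N(\lambda)\gg N^{1-\epsilon}$ needed for amplification to gain anything---a semiclassical operator with symbol supported in a $\lambda^{-\delta}$-neighborhood of $S^*M$ acts on $e_\lambda$ by $1+O(\lambda^{-\infty})$, not by something of size $N$. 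So the amplification step collapses in the non-arithmetic case, and in the arithmetic case Step 3 is an open problem of at least the same depth as the conjecture itself.
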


		In a paper of Chen and Sogge \cite{CSPer}, the first improvement over \eqref{i.1} for surfaces with negative curvature was obtained. It was shown that the period integrals in \eqref{i.1} converge to 0 as $\la\to \infty$ if $(M,g)$ has strictly negative
		curvature.  The proof exploited the simple geometric fact that, due to the presence of negative curvature, there is no non-trivial geodesic rectangle on the universal cover of $M$.  This allowed them to show that the period integrals \eqref{i.1} goes to 0 as $\la\rightarrow\infty$, by using a stationary
		phase argument involving a reproducing kernel for the eigenfunctions. In a recent joint paper of Sogge, the author and Zhang \cite{Gauss}, this method was further refined, and they managed to show that 
		\begin{equation}\label{gauss}
		\Bigl|\, \int_{\gamma_{per}} e_\la \, ds\, \Bigr| = O((\log\lambda)^{-\frac12}),
		\end{equation}
		under the assumption that the curvature $K=K_g$ of $(M,g)$ is  non-positive but allowed to vanish at an averaged
		rate of finite type, in the following sense.
		\begin{deff}[Curvature Condition] Let $(M,g)$ be a surface with non-positive curvature.
		We say that the curvature $K$ of $(M,g)$ satisfies the averaged vanishing condition, if there exists a constant $k>0$, such that whenever $B_r\subset M$ is a geodesic ball of radius $r\le1$ with arbitrary center,  we have that
		\begin{equation}\label{i.2}
		\int_{B_r}K\, dV_g \le -\delta r^k, \quad\text{ for all }r\le 1.
		\end{equation} 
	\end{deff}
		 If the curvature of $M$ is strictly negative, then by the compactness of $M$, one can see that $K\le -\delta$ everywhere for some constant $\delta>0$, and thus clearly \eqref{i.2} is valid. Furthermore,  the averaged vanishing condition holds, if the curvature is negative off of a lower dimensional set of points where it vanishes to finite order.
		 
		The key idea of \cite{Gauss} was to use the Gauss-Bonnet Theorem to get a quantitative version of the ideas used in \cite{CSPer}, that is, to quantitatively avoid geodesic rectangles on the universal cover.  Using Jacobi fields, Wyman (\cite{emmett2}, \cite{emmett1}) generalized the results in \cite{CSPer} and \cite{Gauss} to curves satisfying certain interesting curvature assumptions. See also the recent work of Canzani, Galkowski \cite{canzani} for $o(1)$ bounds for integrals over submanifolds under weaker general geometric assumptions.
		
		Another way of looking at the period integral \eqref{i.1}  is to regard it as the 0-th order Fourier coefficient of $e_\la|_{\gamma_{per}}$. The general $\nu$-th order Fourier coefficients of $e_\la|_{\gamma_{per}}$ are called {\it generalized periods} (see \cite{Rez}). Generalized periods for hyperbolic surfaces naturally arise in the theory
		of automorphic functions, and are of interest in their own right, thus have been studied considerably by number theorists.  It was shown by Reznikov \cite{Rez} that on compact hyperbolic surfaces, if $\gamma$ is a periodic geodesic or a geodesic circle, the $\nu$-th order Fourier coefficients of $e_\lambda|_\gamma$ is uniformly bounded if $\nu\le c_\gamma\la$ for some constant $c_\gamma$ depending on $\gamma$.  In this spirit, it was later proved by the author \cite{inner} that Reznikov's bounds actually hold for arbitrary smooth closed curves over arbitrary compact Riemannian surfaces.
		\begin{theorem}[\cite{inner}]
			Let $\gamma$ be a smooth closed curve on $(M,g)$ parametrized by arc-length. Let $|\gamma|$ denote its length. Given $0<c_0<1$, if $\nu$ is an integer multiple of $2\pi|\gamma|^{-1}$ such that $0\le\frac{\nu}{\la}<c_0<1$,  then we have
			\begin{equation}\label{period}
			\Big|\int_{\gamma}e_\lambda(\gamma(s)) e^{-i\nu s}\, ds\Big|\le C |\gamma|,
			\end{equation}
			where the constant $C$ only depends on $(M,g)$ and $c_0$.
			In addition, if $\mu>c_0^{-1}\la$, then we have
			\begin{equation}\label{period'}
			\Big|\int_{\gamma}e_\lambda(\gamma(s)) e^{-i\nu s}\, ds\Big|\le C_N \nu^{-N},
			\end{equation}
			for all $N\in\mathbb N.$
		\end{theorem}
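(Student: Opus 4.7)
The plan is to establish both bounds via a $TT^*$ argument built on the spectral reproducing kernel, combined with a stationary phase analysis of the half-wave propagator. Set $P=\sqrt{-\Delta_g}$ and pick $\rho\in\mathcal S(\mathbb R)$ with $\rho(0)=1$ and $\hat\rho$ supported in $(-\delta,\delta)$ for some $\delta>0$ smaller than a quarter of the injectivity radius of $M$. Since $\rho(\la-P)e_\la=e_\la$, one can write the generalized period as
\begin{equation*}
\int_\gamma e_\la(\gamma(s))e^{-i\nu s}\,ds
=\bigl\langle e_\la,\,\rho(\la-P)\mu_\nu\bigr\rangle_{L^2(M)},
\qquad \mu_\nu:=e^{i\nu s}\delta_\gamma,
\end{equation*}
where $\delta_\gamma$ denotes the arc-length measure on $\gamma$. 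By Cauchy--Schwarz and $\|e_\la\|_{L^2}=1$, it suffices to bound $\|\rho(\la-P)\mu_\nu\|_{L^2}^2$, which equals $\iint_{\gamma\times\gamma}K_\la(\gamma(s),\gamma(s'))\,e^{i\nu(s-s')}\,ds\,ds'$, where $K_\la$ is the Schwartz kernel of $\rho^2(\la-P)$.

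To analyze $K_\la$, I would use the Fourier representation $\rho^2(\la-P)=(2\pi)^{-1}\int\widehat{\rho^2}(t)e^{it\la}e^{-itP}\,dt$ together with the Hadamard parametrix for the half-wave propagator on geodesic balls of radius less than the injectivity radius. After a stationary phase in $t$, whose critical point sits at $|\xi|=\la$, and polar coordinates $\xi=r\omega$ on the fiber, one obtains a local representation, modulo negligible remainders, of the form
\begin{equation*}
K_\la(x,y)\sim\la\int_{S^1}e^{i\la\varphi(x,y,\omega)}\,a_\la(x,y,\omega)\,d\omega,
\end{equation*}
where $\varphi(x,y,\omega)=\langle\exp_y^{-1}(x),\omega\rangle_{g(y)}$ is the eikonal phase, homogeneous of degree one in $\omega$, and $a_\la$ is a bounded amplitude. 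Substitution produces a three-fold oscillatory integral in $(s,s',\omega)$ with total phase $\Phi_\la(s,s',\omega)=\la\varphi(\gamma(s),\gamma(s'),\omega)+\nu(s-s')$.

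Because $\partial_s\varphi(\gamma(s),\gamma(s'),\omega)=\langle\dot\gamma(s),\omega\rangle+O(|s-s'|)$ near the diagonal, the critical-point equations in the $s$ and $s'$ variables reduce, to leading order, to $\langle\dot\gamma(s),\omega\rangle=\langle\dot\gamma(s'),\omega\rangle=-\nu/\la$, while the angular equation $\partial_\omega\varphi=0$ forces either $s=s'$ or $\omega$ parallel to $\dot\gamma(s')$. The hypothesis $|\nu|/\la\le c_0<1$ rules out the second alternative, since it would require $\langle\dot\gamma(s'),\omega\rangle=\pm1$. On the remaining critical set $\{s=s'\}$ the Hessian of $\Phi_\la$ is non-degenerate, uniformly away from glancing, with size proportional to $\la$; standard stationary phase together with the $\la$ prefactor then produces the desired $O(|\gamma|^2)$ bound for $\|\rho(\la-P)\mu_\nu\|_{L^2}^2$, from which \eqref{period} follows. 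For \eqref{period'}, when $\mu>c_0^{-1}\la$ the equation $\langle\dot\gamma(s),\omega\rangle=-\mu/\la$ has no solution with $|\omega|=1$, so $|\partial_s\Phi_\la|\gtrsim|\mu|$ uniformly, and iterated integration by parts in $s$ produces the rapid decay $O_N(\mu^{-N})$.

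The main obstacle is a clean treatment of the diagonal region $|s-s'|\lesssim\la^{-1}$ where $\varphi$ degenerates and the Hadamard representation must be supplemented by the trivial on-diagonal estimate $K_\la(x,x)=O(\la)$; a standard dyadic decomposition in $|s-s'|$, combined with the uniform Hessian lower bound provided by $c_0<1$ away from the diagonal, is the natural remedy and should close both estimates with constants depending only on $c_0$ and on finitely many derivatives of $\gamma$ and of the metric $g$.
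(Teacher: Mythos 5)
This statement is quoted from the author's earlier paper \cite{inner} and the present paper supplies no proof of it; what the present paper does supply, in Sections~2--4, is the same reproducing-kernel plus stationary-phase machinery applied to the harder logarithmically improved bound for geodesics. Measured against that machinery, your proposal captures the right skeleton: the Cauchy--Schwarz / $TT^*$ reduction to a bilinear estimate for $\rho^2(\la-\sqrt{-\Delta_g})$, the Hadamard parametrix on scales below the injectivity radius, and, crucially, the observation that the frequency gap $|\nu|/\la\le c_0<1$ keeps the critical covector away from the glancing (tangential) direction. That last point is precisely what drives both parts of the theorem, and it is the same observation that underlies the paper's Proposition~\ref{prop5.2} (the lower bound on $\sin\theta_t$). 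The rapid-decay claim for $\nu>c_0^{-1}\la$ is also correctly handled: with $|\partial_s\varphi|\le 1$ one has $|\partial_s\Phi_\la|\ge \nu-\la\gtrsim\nu$, and repeated integration by parts loses only $O(\la/\nu)<c_0$ per step, so it closes.

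Two places where the write-up is imprecise, though neither seems fatal. First, the angular stationary point condition $\partial_\omega\varphi=0$ forces $\omega$ parallel to $\exp_{\gamma(s')}^{-1}(\gamma(s))$, i.e.\ to the \emph{geodesic chord} from $\gamma(s')$ to $\gamma(s)$, not to $\dot\gamma(s')$; these coincide only in the limit $s\to s'$. So the hypothesis $c_0<1$ does not, by itself, ``rule out the second alternative.'' The cleaner route, which is what the paper does in \eqref{2.8}, is to carry out the angular stationary phase first, reducing $K_\la$ to $\la^{1/2}a_\pm\,e^{\pm i\la d_g(x,y)}$ with $|a_\pm(\la;r)|\lesssim r^{-1/2}$, and then note that for a geodesic $\gamma$ one has $d_g(\gamma(s),\gamma(s'))=|s-s'|$, so $\partial_s(\pm|s-s'|+\epsilon(s-s'))=\pm\operatorname{sgn}(s-s')+\epsilon$ is bounded below by $1-c_0$: there is \emph{no} interior stationary point at all, and one integrates by parts, rather than invoking a non-degenerate Hessian. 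For general smooth curves, as in \cite{inner}, the same parametrization is not available and one must genuinely use a second-derivative bound; your ``uniformly non-degenerate Hessian'' assertion is then a real claim that needs a computation, not a remark. Second, the diagonal region $|s-s'|\lesssim\la^{-1}$ is where the $r^{-1/2}$ singularity of the Hadamard amplitude lives; you flag this but leave it as ``a dyadic decomposition should close it.'' In the paper this is handled by the companion bound \eqref{2.10}, $|a_\pm(\la;r)|\lesssim\la^{1/2}$ for $r\le\la^{-1}$, which makes the diagonal strip contribute $O(1)$ directly; spelling that out would close the gap. Finally, a small bookkeeping point: the bilinear form $\|\rho(\la-P)\mu_\nu\|_{L^2}^2$ comes out $O(|\gamma|)$, not $O(|\gamma|^2)$, after stationary phase along the one-dimensional diagonal, which gives $O(\sqrt{|\gamma|})$ for the period; since the theorem only claims $O(|\gamma|)$ this is harmless, but worth getting straight.
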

Like the period integrals, the above bounds are sharp on both the sphere $S^2$ and the flat torus $\mathbb T^2$. (See \cite[Section 5]{inner}.)	We also remark that our frequency gap condition $0\le\frac{\nu}{\la}<c_0<1$ is necessary, in the sense that \eqref{period} is in general not possible at the resonant frequency $\nu=\la$. Indeed, on $S^2$,  $L^2$-normalized highest weight spherical harmonics with frequency $\la$ restricted to the equator have $\nu$-Fourier coefficient $\sim\nu^\frac14$, which represents a big jump from both \eqref{period} and \eqref{period'}.
		Another key insight provided by 
		\cite{Gauss} is that under the curvature assumption \eqref{i.2}, the $\nu$-th Fourier coefficients of $e_\lambda|_{
		\gamma_{per}}$ satisfies
		\begin{equation}\label{gauss'}
		\Bigl|\, \int_{\gamma_{per}} e_\la e^{-i\nu s} \, ds\, \Bigr| \le C_\nu|\gamma_{per}|(\log\lambda)^{-\frac12},
		\end{equation} where $C_\nu$ is a constant times some positive power of $\nu$. On the other hand, it is also conjectured in \cite{Rez} that for compact hyperbolic surface, we should expect much better estimates.
				\begin{conjecture}[\cite{Rez}]\label{C2}Let $\gamma_{per}$ be a fixed closed geodesic on a compact hyperbolic surface $(M,g)$. Then given $\eps>0$, $0<c<1$, there exists a constant $C_\eps$ depending on $\eps$, $M$ and the length of $\gamma_{per}$, such that for $0\le\frac{\nu}{\la}<c_0<1$, we have
			\begin{equation}	\Bigl|\, \int_{\gamma_{per}} e_\la(\gamma_{per}(s)) e^{-i\nu s} \, ds\, \Bigr| \le C_\eps\la^{-\frac12+\eps}.
			\end{equation}		
			
		\end{conjecture}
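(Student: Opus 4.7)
The conjecture asks for polynomial Lindel\"of-type decay, which is far beyond the logarithmic improvement $(\log\la)^{-1/2}$ produced by the Gauss--Bonnet/parallelogram method developed in this paper. Any proof will have to leave the Hadamard parametrix / Ehrenfest-time regime and exploit arithmetic or global spectral structure that the semiclassical techniques used here do not see; the geometric cancellation extracted from quantitatively avoiding geodesic parallelograms on the universal cover cannot, by itself, cross the logarithmic barrier.

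My plan would be to follow the automorphic strategy initiated by Good \cite{Good} and developed by Reznikov \cite{Rez}. First, lift $e_\la$ to the universal cover $\mathbb H^2$ and realize it as a Maass cusp form on the cocompact Fuchsian group $\Gamma$ with $M=\Gamma\backslash\mathbb H^2$. After unfolding the orbit of $\gamma_{per}$ under the hyperbolic subgroup fixing the lifted geodesic, the generalized period
$$\int_{\gamma_{per}} e_\la(\gamma_{per}(s))\, e^{-\ii\nu s}\, ds$$
should be identifiable with a triple product $\langle |e_\la|^2,\, \phi_\nu\rangle_{L^2(M)}$ against an Eisenstein-like automorphic object $\phi_\nu$ manufactured from the hyperbolic character $e^{-\ii\nu s}$ along $\gamma_{per}$. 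A Mellin transform in the spectral parameter of $\phi_\nu$ then converts this into a central-value problem for the triple-product $L$-function $L(1/2,\, e_\la\times e_\la\times \phi_\nu)$, and the conjectured bound is precisely the Lindel\"of hypothesis for this $L$-function in the $\la$ aspect.

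The decisive step would be to break convexity. On arithmetic hyperbolic surfaces one could attempt the Iwaniec--Sarnak amplification method, inserting a short Hecke amplifier to gain a power saving; the frequency $\nu$ would enter as a shift of the analytic conductor and must be tracked uniformly throughout the allowed range $\nu<c_0\la$ in order to preserve the savings. The hard part, unsurprisingly, is the subconvexity estimate itself: adequate bounds for these triple products with the required joint uniformity in $\la$ and $\nu$ are not known even on arithmetic surfaces, and on a general nonarithmetic $M$ there is no Hecke algebra at all, so amplification has no analogue. Closing the polynomial gap between the $(\log\la)^{-1/2}$ bound obtained by our geometric method and the conjectured $\la^{-1/2+\eps}$ therefore appears to require either the arithmeticity of $M$ or a fundamentally new source of cancellation that the Gauss--Bonnet picture does not provide.
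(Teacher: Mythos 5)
You have correctly identified that the quoted statement is a \emph{conjecture} attributed to Reznikov, not a theorem that the paper claims to prove. The paper itself says Conjecture~\ref{C2} ``seems far beyond'' its techniques and offers only ``a first step,'' namely Theorem~\ref{main}, which replaces the conjectured polynomial decay $\la^{-1/2+\eps}$ by the much weaker logarithmic decay $(\log\la)^{-1/2}$ (uniformly for $0\le\nu/\la<c_0<1$). So your refusal to pretend there is a proof, and your diagnosis that the Gauss--Bonnet/parallelogram method and the Hadamard parametrix on the universal cover are intrinsically limited to sub-logarithmic time scales and hence cannot yield a power saving, is exactly in line with the paper's own stance. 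Your remark that any proof must leave the Ehrenfest-time regime and exploit arithmetic or global spectral input is also consistent with the discussion of this conjecture in Reznikov~\cite{Rez}.

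There is, however, a genuine mathematical slip in your sketch of the automorphic route. The generalized period $\int_{\gamma_{per}} e_\la(\gamma_{per}(s))\,e^{-i\nu s}\,ds$ is \emph{linear} in $e_\la$, whereas the inner product $\langle |e_\la|^2,\phi_\nu\rangle_{L^2(M)}$ is \emph{quadratic}; the two objects cannot be identified, and no unfolding against a hyperbolic subgroup turns a linear functional of $e_\la$ into a triple product. Triple products $\langle e_\la\cdot e_\la,\phi\rangle$ and the associated $L(1/2,\pi_\la\times\pi_\la\times\sigma)$ are the right framework for pointwise / $L^4$ estimates for eigenfunctions, not for geodesic periods. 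The correct arithmetic model for $\nu=0$ is the Waldspurger / Katok--Sarnak circle of ideas: on arithmetic $\Gamma\backslash\mathbb H^2$, the \emph{square} of the period over a closed geodesic equals (up to local factors) a central value of a Rankin--Selberg $L$-function $L(1/2,\pi_\la\otimes\chi)$ twisted by a genus character of the associated real quadratic order, and the Lindel\"of conjecture in the $\la$-aspect for \emph{that} family is what is equivalent to Reznikov's conjecture at $\nu=0$. For $\nu\ne0$ one must further incorporate the unitary character $e^{-i\nu s}$ of the one-parameter hyperbolic subgroup, producing a family with conductor growing in both $\la$ and $\nu$; the uniformity you flag for $\nu<c_0\la$ is indeed the delicate point. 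Your concluding assessment is sound: even on arithmetic surfaces the required subconvexity is open with the needed joint uniformity, and on a general cocompact $\Gamma$ there is no Hecke structure to amplify, so the conjecture remains out of reach and the paper neither proves it nor claims to.
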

		The above conjecture, if true, illustrates the huge differences between the sphere/ torus case and the hyperbolic  case. The curvature of the surface being negative somehow ``filters" out almost all lower frequency oscillations of eigenfunctions over a closed geodesic. Even though Conjecture \ref{C2} seems far beyond
		 with current techniques,  we shall take a first step toward it. To be more specific, we shall prove the analog of \eqref{gauss} for generalized periods, by showing that one may take the constant $C_\nu$ in \eqref{gauss'} to be uniform in $\nu$, assuming $0\le\frac{\nu}{\la}<c_0<1$.
		As in \cite{Gauss}, we shall assume through out our paper that the curvature of our surface $(M,g)$ satisfies the averaged vanishing condition \eqref{i.2}. Our main result is the following.
		\begin{theorem}[Main Theorem]\label{main}
			Let $\gamma=\gamma_{per}$ be a periodic geodesic on a Riemannian surface $(M,g)$ with curvature $K$ satisfying the averaged vanishing conditions \eqref{i.2}. Given $0<c_0<1$,  if $\nu$ is an integer multiple of $2\pi|\gamma|^{-1}$ such that $0\le\frac{\nu}{\la}=\epsilon<c_0<1$,  then we have
			\begin{equation}\label{periods}
			\Big|\int_{\gamma}e_\lambda(\gamma(s)) e^{-i\nu s}\, ds\Big|\le C |\gamma|(\log\la)^{-\frac12},
			\end{equation}
			where the constant $C$ only depends on $M$ and $c_0$.
		\end{theorem}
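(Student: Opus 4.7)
The plan is to adapt the argument of \cite{Gauss}, which handled the case $\nu=0$, to the full frequency range $0\le\nu/\lambda=\epsilon<c_0<1$. The starting point is the half-wave reproducing formula
$$e_\lambda(x)=\frac{1}{T}\int \hat\rho(t/T)\,e^{-it\lambda}\bigl(e^{itP}e_\lambda\bigr)(x)\,dt,\qquad P=\sqrt{-\Delta_g},$$
with $T\sim c\log\lambda$ for a suitable small $c=c(c_0)>0$, and $\rho\in\mathcal{S}({\mathbb R})$ satisfying $\rho(0)=1$ and $\hat\rho$ supported in a small neighborhood of the origin. Lifting the kernel of $e^{itP}$ to the universal cover $\tilde M$ via the Hadamard parametrix produces a sum over deck transformations $\alpha\in\Gamma=\pi_1(M)$. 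After a Cauchy--Schwarz reduction of $\bigl|\int_\gamma e_\lambda(\gamma(s))e^{-i\nu s}\,ds\bigr|^2$ to an $L^2$-type quantity and substituting the lifted kernel, the problem becomes controlling a sum over $\alpha$ with displacement $\lesssim T$ of oscillatory integrals of the form
$$J_\alpha=\int_\gamma\int_\gamma a_\alpha(s,s';\lambda)\,e^{i[\lambda\tilde d(\tilde\gamma(s),\alpha\tilde\gamma(s'))+\nu(s-s')]}\,ds\,ds',$$
where $\tilde\gamma$ is a fixed lift of $\gamma$.

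The stationary phase analysis of each $J_\alpha$ differs from \cite{Gauss} in a geometrically essential way. The critical point conditions $\partial_s\tilde d=-\epsilon$ and $\partial_{s'}\tilde d=\epsilon$ force the connecting geodesic from $\tilde\gamma(s)$ to $\alpha\tilde\gamma(s')$ to meet each of $\tilde\gamma$ and $\alpha\tilde\gamma$ at the common angle $\theta:=\arccos(\epsilon)$, rather than perpendicularly as in the $\nu=0$ case. The hypothesis $\epsilon<c_0<1$ keeps $\theta$ uniformly bounded away from $0$, so that the Hessians remain nondegenerate with constants depending only on $c_0$. The critical configuration therefore gives rise to a \emph{geodesic parallelogram}: a quadrilateral on $\tilde M$ with two opposite sides on the lifts $\tilde\gamma$ and $\alpha\tilde\gamma$ of $\gamma$, and two opposite oblique sides along geodesics meeting both lifts at the common angle $\theta$.

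The crucial geometric input is Gauss--Bonnet applied to such a parallelogram $\mathcal{P}$:
$$\int_{\mathcal{P}}K\,dV_g=\sum_{j=1}^{4}\beta_j-2\pi,$$
where $\beta_j$ are the interior angles. A Euclidean parallelogram has angles $\theta,\pi-\theta,\theta,\pi-\theta$ summing to $2\pi$, so in nonpositive curvature $-\int_\mathcal{P} K\,dV_g$ is a strictly positive angle deficit which, by the averaged vanishing assumption \eqref{i.2}, is quantitatively bounded below in terms of the radius of a geodesic ball contained in $\mathcal{P}$. As in \cite{Gauss}, this forces the critical configurations to be quantitatively rigid, and combined with the nondegenerate stationary phase in $(s,s')$ one gains a factor of $T^{-1}\approx(\log\lambda)^{-1}$ over the naive bound. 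Taking square roots yields the $(\log\lambda)^{-1/2}$ improvement in \eqref{periods}.

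The main obstacle will be verifying the Gauss--Bonnet gain uniformly in the oblique angle $\theta=\arccos(\epsilon)$ for $\epsilon\in[0,c_0]$. For rectangles (the $\nu=0$ case in \cite{Gauss}) the enclosed region automatically contains a geodesic ball of definite radius once the two lifts of $\gamma$ are separated by $\gtrsim 1$; for oblique parallelograms the shape becomes long and thin as $\theta\to\arccos(c_0)$, and one has to verify that it still encloses a geodesic ball whose radius is comparable to the perpendicular separation of the two lifts, with constants independent of $\epsilon$. Once this geometric lemma is in place, combining it with \eqref{i.2} and summing the contributions over $\alpha\in\Gamma$ with displacement $\lesssim T$ as in \cite[\S3]{Gauss} completes the proof.
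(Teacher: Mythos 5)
Your proposal identifies the same strategy and, crucially, the same key geometric insight as the paper: the critical-point conditions now force the connecting geodesic to meet the two lifts at the oblique angle $\arccos\epsilon$ (and its supplement) rather than perpendicularly, so the Gauss--Bonnet defect to exploit is that of a geodesic \emph{parallelogram} rather than a rectangle, with the hypothesis $\epsilon<c_0<1$ keeping those angles uniformly bounded away from $0$ and $\pi$. Two small remarks on the details you flag as the "main obstacle": in the paper (as in \cite{Gauss}) the ball fed into \eqref{i.2} has the tiny radius $\lambda^{-\epsilon_0}/A$ and is tangent to one side of the quadrilateral near its midpoint --- it is not comparable to the separation of the two lifts --- so the uniformity-in-$\epsilon$ worry is resolved simply by $\arccos\epsilon>\arccos c_0>0$; and alongside the parallelogram configuration one must also treat the case where the two connecting geodesics cross, which the paper handles by applying Gauss--Bonnet to the resulting geodesic triangle instead.
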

		It is clear that Theorem \ref{main} is an improvement over both \eqref{period} and \eqref{gauss}, where it is better than \eqref{period} for negatively curved surfaces, and contains \eqref{gauss'} as the special case  when $\nu=0$. To the author's best knowledge, Theorem \ref{main} is the first result showing generalized periods converge to 0 uniformly for $\nu<c_0\la$ for compact hyperbolic surfaces.  Also we remark that, like in \cite{Gauss}, our result \ref{periods} also holds for quasi-modes $\psi_\la$ satisfying (see \cite{SZQuas})
		\begin{equation}\label{2.5}
		(\log\la/\la)\|(\Delta_g+\la^2)\psi_\la\|_{L^2(M)}+\|\psi_\la\|_{L^2(M)}\le 1.
		\end{equation}

Our result also says something about the weak $L^2(\gamma)$ limit of restricted eigenfunctions. The result of \cite{CSPer} indicates that, on negatively curved surfaces, eigenfunctions restricted to a closed geodesic converge to zero in the sense of distributions. In a talk given by Sogge, he proposed the question that whether this is also true in the weak-$L^2(\gamma)$ sense. Our corollary gives a first result regarding this question.

\begin{corollary} \label{co}	Let $\gamma=\gamma_{per}$ be a periodic geodesic on a Riemannian surface $(M,g)$ with curvature $K$ satisfying the averaged vanishing conditions \eqref{i.2}. Let $\{e_{\la_j}\}$ be a sequence of eigenfunctions with bounded $L^2(\gamma)$ norm, then $e_{\la_j}|_\gamma\rightarrow 0$ weakly in  $L^2(\gamma)$.
\end{corollary}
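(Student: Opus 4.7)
The plan is to reduce weak convergence in $L^2(\gamma)$ to decay of inner products against a dense family of test functions. Recall that a bounded sequence in a Hilbert space converges weakly to zero if and only if its inner products with every element of a dense subset converge to zero. The uniform $L^2(\gamma)$ bound on $\{e_{\la_j}|_\gamma\}$ is given by hypothesis, so it remains only to exhibit an appropriate dense family and verify decay on it.

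Since $\gamma$ is a closed curve of length $|\gamma|$ parametrized by arclength, the exponentials $\{e^{i\nu s} : \nu \in 2\pi|\gamma|^{-1}\mathbb{Z}\}$ form an orthogonal basis of $L^2(\gamma)$, so their finite linear combinations (the trigonometric polynomials) are dense. We may assume $\la_j \to \infty$, since otherwise the sequence lies in finitely many finite-dimensional eigenspaces and the conclusion is elementary. For each fixed such $\nu$, eventually $\nu/\la_j < 1/2$, so Theorem \ref{main} applied with, say, $c_0 = 1/2$ gives
$$\Big|\int_\gamma e_{\la_j}(\gamma(s))\, e^{-i\nu s}\, ds\Big| \le C|\gamma|(\log \la_j)^{-1/2} \longrightarrow 0.$$
By linearity, the same decay holds when tested against any finite trigonometric polynomial. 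Combined with the uniform $L^2(\gamma)$ bound and density of trigonometric polynomials, this establishes $e_{\la_j}|_\gamma \rightharpoonup 0$ in $L^2(\gamma)$.

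There is no substantive obstacle in the argument itself: the entire content is packaged inside Theorem \ref{main}. What makes the corollary work is precisely the uniformity in $\nu$ over the range $0 \le \nu/\la < c_0 < 1$. The earlier period-integral bound \eqref{gauss} would only handle $\nu = 0$, and the $\nu$-dependent estimate \eqref{gauss'} would not suffice either, since weak convergence requires testing each fixed Fourier mode independently and demands a constant that does not blow up with $\nu$. Thus Corollary \ref{co} is essentially a direct distillation of the new uniform-in-$\nu$ decay provided by Theorem \ref{main}.
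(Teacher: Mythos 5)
Your proof is correct and follows essentially the same route as the paper's: the paper carries out the density argument explicitly via an $\eps$--$N$ truncation of the Fourier series of a test function $g\in L^2(\gamma)$, whereas you invoke the abstract Hilbert-space lemma that a bounded sequence converges weakly to zero once its inner products vanish against a dense set, and both reduce to the uniformity in $\nu$ furnished by Theorem \ref{main}. (Your side remark that the case of bounded $\la_j$ is ``elementary'' is not quite right---a constant sequence of eigenfunctions need not go weakly to zero---but the corollary tacitly assumes $\la_j\to\infty$, so this is harmless.)
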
 

By the principle of uniform boundedness, a weakly convergent sequence in a Hilbert space must be bounded, thus the above corollary actually implies that $e_{\la_j}|_\gamma\rightarrow 0$ weakly in $L^2(\gamma)$ if and only if they have bounded $L^2(\gamma)$ norms.
Combined with the quantum ergodic restriction (QER) theorem of Toth and Zelditch \cite{TZ}, which says that we have quantum ergodic restriction for generic closed geodesics on a compact hyperbolic surfaces, we have the following.

\begin{corollary} Let $(M,g)$ be a compact hyperbolic surface, $\{e_{\la_j}\}$ an orthonormal basis of eigenfunctions of frequency $\la_j$. Then for a generic\footnote{In the sense of \cite{TZ}} closed geodesic $\gamma$, there exists a density one subsequence $ \{e_{\la_{j_k}}\}$ such that $ e_{\la_{j_k}}|_\gamma\rightarrow 0$ weakly in $L^2(\gamma)$.
\end{corollary}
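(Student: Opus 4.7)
The plan is to combine Corollary~\ref{co} with the quantum ergodic restriction (QER) theorem of Toth and Zelditch~\cite{TZ}. Corollary~\ref{co} needs only a uniform bound on $\|e_{\la_j}|_\gamma\|_{L^2(\gamma)}$ in order to produce weak convergence to zero, and QER is precisely the tool that yields such a bound along a density one subsequence for a generic closed geodesic $\gamma$ on a compact hyperbolic surface.

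In more detail, I would first invoke QER: on a compact hyperbolic surface, for a generic periodic geodesic $\gamma$, Toth and Zelditch produce a density one subsequence $\{e_{\la_{j_k}}\}$ of the orthonormal eigenbasis such that, for every zeroth-order semiclassical symbol $a$ supported in the tangential (``elliptic'') region of $T^*\gamma$,
\begin{equation*}
\bigl\langle \mathrm{Op}_{\la_{j_k}}(a)\, e_{\la_{j_k}}|_\gamma,\, e_{\la_{j_k}}|_\gamma\bigr\rangle_{L^2(\gamma)} \longrightarrow \omega(a)
\end{equation*}
for an explicit finite measure $\omega$. Choosing $a$ to be a cutoff equal to $1$ on the full tangential region gives that the tangential portion of $\|e_{\la_{j_k}}|_\gamma\|_{L^2(\gamma)}^2$ is uniformly bounded in $k$. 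The complementary ``hyperbolic'' portion --- the Fourier coefficients of $e_{\la_{j_k}}|_\gamma$ at frequencies $|\nu|>c_0^{-1}\la_{j_k}$ --- is controlled directly by the rapid decay estimate~\eqref{period'} of~\cite{inner}, which shows these coefficients are $O_N(\la_{j_k}^{-N})$ for every $N$. Summing the two contributions yields $\sup_k \|e_{\la_{j_k}}|_\gamma\|_{L^2(\gamma)} < \infty$.

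With $L^2(\gamma)$-boundedness in hand, the proof concludes by invoking Corollary~\ref{co}. A compact hyperbolic surface has constant curvature $K\equiv -1$, and since geodesic balls of radius $r\le 1$ have area $\gtrsim r^2$, the averaged vanishing condition~\eqref{i.2} is trivially satisfied (with $k=2$). Corollary~\ref{co} therefore applies to the bounded subsequence $\{e_{\la_{j_k}}|_\gamma\}$ and gives $e_{\la_{j_k}}|_\gamma \to 0$ weakly in $L^2(\gamma)$. The one delicate point I anticipate is the bookkeeping needed to pass from the microlocal QER statement (matrix elements of pseudodifferential operators supported in the tangential region) to a genuine $L^2(\gamma)$-norm bound; but that is exactly the step where the high-frequency estimate~\eqref{period'} fills the gap, making the whole argument essentially a two-line consequence of Corollary~\ref{co} combined with~\cite{TZ}.
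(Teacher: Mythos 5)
Your proposal takes the same route as the paper: via Corollary~\ref{co} (and the uniform boundedness principle), the claim reduces to showing $\|e_{\la_{j_k}}|_\gamma\|_{L^2(\gamma)}$ stays bounded along a density one subsequence for generic $\gamma$, and the paper simply cites the Toth--Zelditch QER theorem to furnish that bound, exactly as you do. The additional microlocal bookkeeping you sketch --- tangential mass controlled by QER, Fourier modes with $|\nu|>c_0^{-1}\la_{j_k}$ controlled by \eqref{period'} --- goes beyond what the paper records and as written leaves the band $\la_{j_k}<|\nu|\le c_0^{-1}\la_{j_k}$ unaccounted for, but since the $L^2(\gamma)$-boundedness of $e_{\la_{j_k}}|_\gamma$ along a density one subsequence is supplied directly by the QER theorem (and the attendant local Weyl/Kuznecov law for restrictions), that is a cosmetic imprecision in your filling-in rather than a genuine gap in the argument.
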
 

The proof of Theorem \ref{main} will follow the general scheme introduced in \cite{CSPer} and \cite{Gauss} rather closely.  In \cite{Gauss}, Gauss-Bonnet Theorem was used to exploit the defects of geodesic quadrilaterals that arise in these
arguments. This allows one to quantitatively avoid geodesic rectangles, which in turn provides favorable control of lower bounds for first and second derivatives of the phase functions 
occurring in the stationary phase arguments. In the current paper, we shall further refine this strategy, by essentially using the Gauss-Bonnet Theorem to quantitatively avoid geodesic parallelograms. We remark that, the strategy in \cite{emmett2} should also provide some nontrivial results for improved generalized periods estimates over curves that satisfy certain  curvature conditions. (e.g. geodesic circles on $\mathbb T^2$ certainly work.) However, it seems that an analog of the curvature condition in \cite{emmett2} has to involve the frequency ratio $\epsilon=\nu/\la$.

Our proof represents the intermediate case between the $\log$-improved period integral estimates in \cite{Gauss} and the $\log$-improved $L^2$ restriction bounds of Blair and Sogge \cite{BSTop}, which says that
\begin{equation}\label{top1}
\|e_\lambda\|_{L^2(\gamma)}\le C\frac{\lambda^\frac14}{(\log\lambda)^\frac12}\|e_\lambda\|_{L^2(M)}.
\end{equation}
Their proof relied on the Topnogov Comparison Theorem, which provided a favorable control over a microlocalization near the tangential direction. (See \cite{chen}, \cite{ChenS}, \cite{xizhang} and \cite{Blair} for further developments on improved $L^p$ restriction estimates.) In contrast, the main contribution in our estimates will be coming from microlocal directions that are strictly in between the tangential case ($L^2$ restriction) and the orthogonal  case (period integrals).

This paper is organized as follows.  In the next section we shall perform a standard reduction using the reproducing kernels for eigenfunctions (see \cite{SFIO}).   By lifting the calculations to the universal cover $(\mathbb R^2,\widetilde g)$, we reduce the proof of Theorem \ref{main} to estimating
oscillatory integrals over geodesics in $(\mathbb R^2,\widetilde g)$. In \S 3,  we employ Gauss-Bonnet Theorem to prove some crucial geometric facts, and then use them to obtain favorable bounds for various derivatives of the phase function of the aforementioned oscillatory integrals. 
In \S 4 we conclude the proof of Theorem \ref{main} using a modified stationary phase lemma developed in \cite{Gauss}, and give the proof of Corollary \ref{co}

In what follows, by possibly rescaling the metric, we shall assume that the injectivity radius of $M$, $\text{Inj}\,M$, is at least 10. We shall always use the letter $\epsilon$ to denote the frequency ratio $\nu/\la$, and $c_0$ a positive constant that is strictly less than 1. The letter $C$ will be used to denote various positive constants depending on $(M,g)$ and $c_0$, whose value could change from line to line.

\textit{Acknowledgments.}
The author would like to thank Professor Sogge, Professor Greenleaf and Professor Iosevich for their constant support and mentoring. It is a pleasure for the author to thank his collaborators Sogge and Zhang, since this paper would not have been possible without their joint work \cite{Gauss}. The author also would like to thank Professor Zelditch for suggesting this problem.

		\section{A Standard Reduction: Lifting to the Universal Cover}
		Let us choose a function $\rho\in {\mathcal S}(\R)$ satisfying
		$$\rho(0)=1 \quad \text{and } \, \, \Hat \rho(\tau)=0 , \quad |\tau|\ge 1/4.$$
		Then for any $T>0$, the Fourier multiplier operator $\rho(T(\la-\sqrt{-\Delta_g}))$ defined by the spectral theorem reproduces eigenfunctions, in the sense that $\rho(T(\la-\sqrt{-\Delta_g}))e_\la=e_\la$. Therefore, in order to prove \eqref{periods}, it suffices to show that we can choose $T=c\log\la$ for some small constant $c=c_M$, so that for $0\le\frac{\nu}{\la}=\epsilon<c_0<1$, we have the uniform bounds
		\begin{equation}\label{2.1}
		\Bigl|\, \int b(t) e^{-i\nu t} \bigl(\rho(T(\la-\sqrt{-\Delta_g}))f\bigr)(\gamma(t))\, dt \, \Bigr|\le C_{M,b}\, (\log\la)^{-1/2}\, \|f\|_{L^2(M)},
		\end{equation}
		where $b(t)$ is a fixed smooth bump function in $C_0^\infty((-1/2,1/2))$.

		To set up the proof of \eqref{2.1} we first note that the kernel of the operator thereof is given by
		$$\rho\bigl(T(\la-\sqrt{-\Delta_g})\bigr)(x,y)=\sum_j \rho(T(\la-\la_j)) e_j(x)\overline{e_j(y)}.$$
		Hence, by Cauchy-Schwarz inequality, we would have \eqref{2.1} if we could show that
		$$\int_M\Bigl|\, \int b(t)\,e^{-i\nu t} \sum_j \rho\bigl(T(\la-\la_j)\bigr) \, e_j(\gamma(t)) \overline{e_j(y)} \, dt \, \Bigr|^2 \, dV_g(y)
		\le C_{M,b}(\log\la)^{-1},$$
		By orthogonality, if $\chi(\tau)=(\rho(\tau))^2$, this is equivalent to showing that if
		$$b(t,s)=b(t)\overline{b(s)}\in C^\infty_0((-1/2,1/2)^2),$$
		then
		\begin{equation}\label{2.7}
		\Bigl|\, \iint b(t,s) e^{i\nu(s-t)}\sum_j \chi(T(\la-\la_j)) e_j(\gamma(t)) \overline{e_j(\gamma(s))} \, dt ds\, \Bigr|
		\le C_{M,b}(\log \la)^{-1}.
		\end{equation}

		Note that by Fourier inversion formula, we have
		$$\sum_j \chi(T(\la-\la_j))e_j(x)\overline{e_j(y)} =\frac1{2\pi T} \int \hat \chi(\tau/T) e^{i\tau \la} 
		\bigl(e^{-i\tau \sqrt{-\Delta_g}}\bigr)(x,y) \, d\tau.$$
		As a first step of proving \eqref{2.7}, let us separate the local and global contributions of \eqref{2.7}. We shall do this by fixing  a bump function $\beta\in C^\infty_0(\R)$ satisfying
		$$\beta(\tau)=1, \, \, \, |\tau|\le 3 \quad \text{and } \, \, \beta(\tau)=0, \, \, \, |\tau|\ge 4.$$
		Then the proof of Lemma 5.1.3 in \cite{SFIO} shows that, since we are assuming that $\text{Inj}\,M\ge10$, we can write
		\begin{multline}\label{2.8}
		\frac1{2\pi T} \int \beta(\tau) \hat \chi(\tau/T) e^{i\tau \la} \bigl(e^{-i\tau\sqrt{-\Delta_g}}\bigr)(x,y) \, d\tau
		\\
		=\frac{\la^{1/2}}T \sum_\pm a_\pm(\la; d_g(x,y))e^{\pm i \la d_g(x,y)} +O(T^{-1}),
		\end{multline}
		if $d_g$ denotes the Riemannian distance on $(M,g)$, where
		\begin{equation}\label{2.9}
		\Bigl|\, \frac{d^j}{dr^j} a_\pm(\la;r)\, \Bigr|\le C_j r^{-j-1/2} \quad \text{if } \, \, r\ge \la^{-1},
		\end{equation}
		and
		\begin{equation}\label{2.10}
		|a_\pm(\la;r)|\le C\la^{1/2} \quad \text{if } \, \, \, 0\le r\le \la^{-1}.
		\end{equation}
		
		Since $d_g(\gamma(t),\gamma(s))=|t-s|$, a simple integration by parts argument shows that \eqref{2.9} and \eqref{2.10} imply  
				$$\la^{1/2}\Bigl|\, \iint b(t,s)e^{\pm i(\la|t-s|+\nu(s-t))} a_\pm(\la;|t-s|) \, dt ds \, \Bigr|\le C_{M,b}.$$
		Therefore, by \eqref{2.8} and the fact that $\nu<c_0\la$ with $0<c_0<1$, for any given $c>0$, we have
		\begin{multline}\label{2.11}
		\frac1{2\pi T}\Bigl|\, \iiint b(t,s) e^{i\nu(s-t)} \beta(\tau)\hat \chi(\tau/T) e^{i\tau\la} \bigl(e^{-i\tau\sqrt{-\Delta_g}}\bigr)(\gamma(t),\gamma(s)) \, d\tau dt ds\, \Bigr|
		\\
		\le C_{M,b}(\log\la)^{-1}, \quad \text{if } \, \, T=c\log\la,
		\end{multline}
		
		In view of \eqref{2.11}, we conclude that we would have \eqref{2.7} if we could obtain the following bounds for
		the remaining part of $\chi(T(\la-\sqrt{-\Delta_g}))$:
		\begin{multline}\label{2.12}
		\frac1{2\pi T}\Bigl| \iiint b(t,s)e^{i\nu(s-t)}\bigl(1-\beta(\tau)\bigr) \, \hat \chi(\tau/T) e^{i\tau\la}
		\bigl(e^{-i\tau\sqrt{-\Delta_g}}\bigr)(\gamma(t),\gamma(s)) \, d\tau ds dt \, \Bigr|
		\\
		\le C_{M,b}(\log\la)^{-1},
		\end{multline}
		if $T=c\log\la$, for some $c\ll 1$ to be specified later.
		Note that for $T\ge 1$ we have the uniform bounds
		$$\frac1{2\pi T}\Bigl| \, \int \bigl(1-\beta(\tau)\bigr) \hat \chi(\tau/T) e^{i\tau (\la+\la_j)} \, d\tau\, \Bigr|
		\le C_N(1+|\la+\la_j|)^{-N}, \, \, \, N\in\mathbb N,$$
		and so, since $\la_j\ge 0$ and $\la\gg 1$,
		$$\frac1{2\pi T}\Bigl| \, \int \bigl(1-\beta(\tau)\bigr) \hat \chi(\tau/T) e^{i\tau \la}
		\bigl(e^{i\tau \sqrt{-\Delta_g}}\bigr)(\gamma(t),\gamma(s)) \, d\tau\, \Bigr|
		\le C_N(1+\la)^{-N}, \, \, \, N\in\mathbb N.$$
		Thus, by Euler's formula, to prove \eqref{2.12}, it suffices to show that if $T=c\log\la$, (for suitable choice of $c=c_M>0$) we have
		\begin{equation}\label{2.13}
		\Bigl| \, \iiint b(t,s) e^{i\nu(s-t)} \bigl(1-\beta(\tau)\bigr)\hat \chi(\tau/T) e^{i\tau \la} \bigl(\cos \tau \sqrt{-\Delta_g}\bigr)(\gamma(t),\gamma(s)) \, d\tau dt ds \, \Bigr|
		\le C_{M,b}.
		\end{equation}
		Here $\bigl(\cos \tau\sqrt{-\Delta_g}\bigr)(x,y)$ is the wave kernel for the map $$C^\infty(M)\ni f\to u\in C^\infty(\R\times M)$$ solving the Cauchy problem
		with initial data $(f,0)$, i.e.,
		\begin{equation}\label{Cauchy}\bigl(\partial_\tau^2-\Delta_g\bigr)u=0, \quad u(0, \, \cdot \, )=f, \quad \partial_\tau u(0, \, \cdot \, )=0.\end{equation}
		
		To be able to compute the integral in \eqref{2.13} we need to relate the wave kernel on $M$ to the corresponding waver kernel on the universal cover of $M$. By the Cartan-Hadamard Theorem  (see e.g. \cite[Chapter 7]{doCarmo}), we can lift the calculations up to the universal cover $(\mathbb{R}^2,\tilde g)$ of $(M,g)$.
		
		Let $\Gamma$ denote the group of deck transformations preserving the associated covering map $\kappa :{\mathbb{R}^2} \to M$ coming from the exponential map about the point $\gamma(0)$, which is chosen to be  the midpoint of the 
		geodesic segment $\{\gamma(t): \, |t|\le \tfrac12\}$. The metric $\tilde g$ is its pullback via the covering map $\kappa$. We shall measure the distances in $(\mathbb{R}^2,\tilde g)$ using its Riemannian distance function $d_{\tilde g}({}\cdot{},{}\cdot{})$. We choose a Dirichlet fundamental domain, $D \simeq M$, centered at the lift $\tilde \gamma(0)$ of $\gamma(0)$, which has the property that $\Rt$ is the disjoint union of the
		 $\alpha(D)$ as $\alpha$ ranges over $\Gamma$ and $\{\tilde y\in \Rt: \, \dgt(0,\tilde y)<10\} \subset D$ since we are assuming that $\text{Inj}\,M\ge10$.   It then follows that we can identify every point $x\in M$ with the unique point $\tilde x\in D$ having the property
		 that $\kappa(\tilde x)=x$.  Let also $\{\tilde \gamma(t)\in\mathbb R^2: |t|\le \tfrac12\}$ similarly denote the set of points in $D$ corresponding to our geodesic segment
		 $\gamma$ in $M$.  Then $\{\tilde \gamma(t): |t|\le \tfrac12\}$ is a line segment on $\mathbb R^2$ of unit length whose midpoint is the origin, and we shall denote just by
		 $\tilde \gamma$ the line through the origin containing this segment. Note that $\tilde \gamma$ then is a geodesic in $\Rt$ for the metric $\tg$, and 
		the Riemannian distance between two points on $\tilde \gamma$ agrees with their Euclidean distance.
		Finally, if $\Delta_{\tg}$ denotes the Laplace-Beltrami operator associated to $\tg$, then, since solutions of the  Cauchy problem \eqref{Cauchy} for $(M,g)$ correspond exactly
		to  $\Gamma$-invariant solutions of the corresponding Cauchy problem associated to the lifted wave operator $\partial^2_t-\Delta_{\tg}$, we have the following
		Poisson formula relating the wave kernel on $(M,g)$ to the one for the universal cover $(\Rt,\tg)$:
		\begin{equation}\label{2.14}
		\bigl(\cos \tau \sqrtg \big)(x,y)=\sum_{\alpha\in \Gamma}\bigl(\cos \tau \sqrtd  \bigr)(\tilde x, \alpha(\tilde y)).
		\end{equation}
		
		Due to this formula, we would have \eqref{2.13} if we could show that for $T=c\log\la$,
		\begin{multline}\label{2.15}
		\sum_{\alpha\in \Gamma} \Bigl| \, \iiint b(t,s)\,e^{i\nu(s-t)} \bigl(1-\beta(\tau)\bigr) \hat \chi(\tau/T) e^{i\tau \la}
		\bigl(\cos \tau \sqrtd \bigr)(\tilde \gamma(t),\alpha(\tilde \gamma(s))) \, d\tau dt ds \, \Bigr|
		\\
		\le C_{M,b}.
		\end{multline}
		
		By Huygens' Principle, the wave kernel $\bigl(\cos \tau \sqrtd \bigr)(\tilde x,\tilde y)$ vanishes identically if $d_{\tilde g}(\tilde x,\tilde y)>\tau$, where $d_{\tilde g}$ denotes the
		Riemannian distance on $(\Rt,\tilde g)$.  Since $\chi=\rho^2$, and our assumption that $\rho(\tau)=0$ for $|\tau|\ge 1/4$ implies that the integrand in
		\eqref{2.15} vanishes when $|\tau|\ge T/2$.  Therefore, since there are $O(\exp(C_MT))$ ``translates'' $\alpha(D)$ of $D$ satisfying $d_{\tilde g}(D,\alpha(D))<T$,
		we conclude that the sum in \eqref{2.15} involves $O(\exp(C_MT))$ nonzero terms.  Thus, in order to prove Theorem \ref{main}, it suffices to show that for large enough $\la$,
					\begin{multline}\label{2.16}
		\Bigl| \, \iiint b(t,s) e^{i\nu(s-t)} \bigl(1-\beta(\tau)\bigr) \hat \chi(\tau/T) e^{i\tau \la}
		\bigl(\cos \tau \sqrtd \bigr)(\tilde \gamma(t),\alpha(\tilde \gamma(s))) \, d\tau dt ds \, \Bigr|
		\\
		\le C_{M,b} \la^{-\sigma_M} \quad \text{if } \, \, \, T=c\log \la,
		\end{multline}
		We shall need an explicit expression for the kernel
		\begin{equation}\label{5.1}
		K_{T,\la}(x,y)=\int \bigl(1-\beta(\tau)\bigr)\hat \chi(\tau/T) e^{i\tau \la} \bigl(\cos \tau \Pe\bigr)(x,y) \, d\tau,
		\end{equation}
		evaluating at $(x,y)=(\tilde \gamma(t),\alpha(\tilde \gamma(s)))$.
		\begin{proposition}[{\cite[Proposition 5.1]{Gauss}}]\label{prop5.1}  Let $T=c\log\la$. If $d_{\tilde g}\ge1$  and $\la\gg 1$, we have
			\begin{equation}\label{5.2}
			K_{T,\la}(x,y)= \la^{1/2}\sum_\pm a_\pm(T,\la; x,y) e^{\pm i\la d_{\tilde g}(x,y)} +R_{T,\la}(x,y),
			\end{equation}
			where
			\begin{equation}\label{5.3}
			|a_\pm(T,\la; x,y)|\le C,
			\end{equation}
			and
			if $\ell=1,2,3,\dots$ is fixed 
			\begin{equation}\label{5.4}
			\Delta^\ell_x a_\pm(T,\la; x,y) =O(\exp(C_\ell d_{\tilde g}(x,y)))
			\end{equation}
			or
			\begin{equation}\label{5.5}
			\Delta^\ell_y a_\pm(T,\la; x,y) =O(\exp(C_\ell d_{\tilde g}(x,y))),
			\end{equation}
			and
			\begin{equation}\label{5.6}
			|R_{T,\la}(x,y)|\le\la^{-1},
			\end{equation}
			provided the constant $c>0$  is sufficiently small.
			Also,  in this case we also have
			\begin{equation}\label{5.7}
			K_{T,\la}(x,y)=O(\la^{-1}), \quad \text{if } \, \, \, d_{\tilde g}(x,y)\le 1.
			\end{equation}
		\end{proposition}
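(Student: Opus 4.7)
The plan is to apply the Hadamard parametrix construction for the wave kernel on the Cartan-Hadamard manifold $(\Rt,\tg)$. Since nonpositive curvature precludes conjugate points, $\exp_{\tilde y}$ is a global diffeomorphism and $\dgt(\cdot,\cdot)$ is smooth off the diagonal, so the Hadamard parametrix is globally defined on $\Rt\times\Rt$.

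First I would split $\cos\tau\sqrtd = \tfrac12(e^{i\tau\sqrtd}+e^{-i\tau\sqrtd})$ and represent each half-wave kernel, modulo a smoothing remainder of arbitrarily negative order $-N$, as a global Fourier integral
$$(e^{\mp i\tau\sqrtd})(x,y) = \sum_{k=0}^{N-1}\int_{\Rt} e^{i\langle\exp_y^{-1}(x),\eta\rangle}\,e^{\mp i\tau p(y,\eta)}\,\alpha_{k,\pm}(x,y,\eta)\,d\eta + E_N(\tau;x,y),$$
where $p(y,\eta)$ is the principal symbol of $\sqrtd$ and $\alpha_{k,\pm}$ is a classical symbol of order $-k$ in $\eta$. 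Substituting this into the defining integral for $K_{T,\la}$ and passing to polar coordinates $\eta = r\omega$, the $\tau$-integral
$$\int(1-\beta(\tau))\hat\chi(\tau/T)e^{i\tau(\la\mp r)}\,d\tau$$
is essentially $T\chi(T(\la\mp r))$ up to a rapidly decaying error from the $\beta$ cutoff. This localizes $r$ to an $O(T^{-1})$ window around $\la$ in the ``$-$'' case, while the ``$+$'' case is nonstationary and contributes only a smoothing term. Stationary phase in the remaining $(r,\omega)$ integral against the phase $\langle\exp_y^{-1}(x),\omega\rangle$ then produces the factor $\la^{1/2}$ together with the oscillation $e^{\pm i\la\dgt(x,y)}$, yielding the ansatz in \eqref{5.2}.

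The amplitude bounds \eqref{5.3}--\eqref{5.5} follow because every Hadamard coefficient and its $x,y$-derivatives are expressible through $\tg$, its curvature, and the exponential map; on a Cartan-Hadamard surface the relevant Jacobi field quantities grow at most exponentially in $\dgt(x,y)$, which after iterated differentiation gives the $\exp(C_\ell\dgt(x,y))$ estimates. For the remainder estimate \eqref{5.6}, the $N$-th tail of the symbolic expansion gains $\la^{-N}$ from oscillatory integration but loses at most $e^{C_N T} = \la^{cC_N}$ from differentiating the Hadamard amplitudes and the $\hat\chi(\tau/T)$ cutoff; choosing $c$ small enough that $cC_N < N-1$ for some large $N$ produces the $O(\la^{-1})$ bound. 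Finally, \eqref{5.7} is immediate from finite propagation speed: when $\dgt(x,y)\le 1$, the condition $|\tau|\ge 3$ on the support of $1-\beta$ places the integrand well outside the light cone, so the leading parametrix terms vanish and only the smoothing tail survives, contributing $O(\la^{-1})$.

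The main obstacle is balancing, simultaneously in \eqref{5.4}--\eqref{5.6}, the exponential-in-$\dgt$ growth of the Hadamard amplitudes (forced by the geometry of the universal cover) against the $\la^{-N}$ gains from oscillatory integration. This is precisely what dictates the choice $T = c\log\la$ with $c$ sufficiently small, so that each factor $e^{C_\ell T}$ is re-absorbed into a slowly growing power $\la^{cC_\ell}$ and remains harmless relative to the $\la^{-N}$ decay. Since the proposition is cited verbatim from \cite{Gauss}, the detailed verification has already been carried out there, and the task here is essentially to invoke that result.
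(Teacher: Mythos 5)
The paper does not prove Proposition \ref{prop5.1}: it cites it verbatim from \cite{Gauss} and notes only that it goes back to B\'erard's use of the Hadamard parametrix for the wave operator on the universal cover. Your sketch of \eqref{5.2}--\eqref{5.6} via the half-wave FIO representation, $\tau$-localization of $r$ near $\lambda$, stationary phase in the spatial frequency, exponential control of the Hadamard coefficients through Jacobi fields, and the small-$c$ balancing of $e^{C_\ell T}=\lambda^{cC_\ell}$ against the $\lambda^{-N}$ gains is exactly the approach the citation points to and is correct in outline.

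The one place your argument goes wrong is the justification of \eqref{5.7}. You write that $|\tau|\ge 3$ and $d_{\tilde g}(x,y)\le 1$ put the integrand ``well outside the light cone, so the leading parametrix terms vanish.'' This is backwards, and the vanishing claim is false. The pair $(\tau,x)$ with $|\tau|\ge 3> d_{\tilde g}(x,y)$ lies strictly \emph{inside} the light cone of $y$, and on a two-dimensional manifold (odd spacetime dimension, hence no strong Huygens principle) the wave kernel $(\cos\tau\sqrt{-\Delta_{\tilde g}})(x,y)$ does not vanish inside the cone; the Hadamard parametrix terms built from $(\tau^2-d_{\tilde g}^2)_+^{k-1/2}$ are supported precisely there. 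What is true, and what actually gives \eqref{5.7}, is that the singular support of the wave kernel is the cone $\{|\tau|=d_{\tilde g}(x,y)\}\subset\{|\tau|\le 1\}$, which is disjoint from $\operatorname{supp}(1-\beta)\subset\{|\tau|\ge 3\}$; hence on the region of integration the kernel is a smooth function of $\tau$. The $O(\lambda^{-1})$ bound then follows by integrating by parts repeatedly in $\tau$ against $e^{i\tau\lambda}$, with the derivative losses of size $e^{CT}=\lambda^{cC}$ (from the smooth but exponentially growing parametrix amplitudes and the $\hat\chi(\tau/T)$ cutoff) absorbed by taking $c$ sufficiently small, exactly as in your treatment of \eqref{5.6}. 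It is the smoothness-plus-nonstationarity in $\tau$, not a vanishing of the parametrix, that produces the decay.
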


		Proposition \ref{prop5.1} goes back to the work of B\'erard \cite{Berard} on improved remainder estimates for the Weyl Law, which was proved using the Hadamard parametrix for the wave operator on the universal cover. The version we are quoting here is in \cite{Gauss}. Clearly \eqref{5.7} implies that \eqref{2.16} is valid when $\alpha=Id$.  For $\alpha\neq Id$,  our assumption that $\text{Inj}\,M\ge10$
		guarantees that $d_{\tilde g}(\tilde \gamma(t),\alpha(\tilde \gamma(s)))\ge 1$ in this case, so we can
		use \eqref{5.2}--\eqref{5.6}.
		If we let
		\begin{equation}\label{5.18'}
		\phi_\pm(\alpha;t,s)=\epsilon(s-t)\pm d_{\tilde g}(\tilde \gamma(t),\alpha(\tilde \gamma(s))),
		\end{equation}
		where $\epsilon=\nu/\la$, and $\epsilon<c_0$. Without loss of generality, from now on,  we shall only consider the contribution coming from the term with phase function:
		\begin{equation}\label{5.18}
		\phi(\alpha;t,s)=\phi_+(\alpha;t,s)=\epsilon(s-t)+d_{\tilde g}(\tilde \gamma(t),\alpha(\tilde \gamma(s))),
		\end{equation}
		and amplitude $$a(T,\la; \tilde \gamma(t),\alpha(\tilde \gamma(s)))=a_+(T,\la; \tilde \gamma(t),\alpha(\tilde \gamma(s))).  $$
		The other term with minus signs can be handled in a similar fashion. Indeed, by possibly reversing the orientation of $\gamma$, one can see that there is no essential difference between these two cases.
		The main term for the integral in \eqref{2.16} is 
			\begin{equation}\label{2.16'}
		\la^{\frac12} \iint b(t,s)   a(T,\la; \tilde \gamma(t),\alpha(\tilde \gamma(s))) e^{ i\la \phi(\alpha;t,s)} \,dt ds,
		\end{equation}
		while, by \eqref{5.6}, the remaining term enjoys much better estimates than that in \eqref{2.16}.
		Now we will have \eqref{2.16}, and thus finishing the proof of our main theorem, if we can prove the following:
		
		\begin{proposition}\label{prop2.1}  Given $(M,g)$ with curvature $K$ satisfying \eqref{i.2}, we can fix $c=c_M>0$ so that for $\alpha\neq Id$, $\la\gg 1$, we have
	\begin{equation}\label{2.21}
\Bigl| \, \la^\frac12\iint b(t,s)   a(T,\la; \tilde \gamma(t),\alpha(\tilde \gamma(s))) e^{ i\la \phi(\alpha;t,s)} \,dt ds \Bigr|
\\
\le C_{M,b} \la^{-\sigma_M}, \quad \text{if } \, \, \, T=c\log \la,
\end{equation}
			for some $\sigma_M>0$ which only depends on $(M,g)$.
		\end{proposition}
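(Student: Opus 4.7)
\emph{Proof sketch of Proposition \ref{prop2.1}.} My plan is to combine stationary phase in $s$ with integration by parts in $t$, the key input being a quantitative lower bound on the derivative of the reduced phase, obtained from Gauss--Bonnet applied to geodesic \emph{parallelograms} in $(\R^2,\tilde g)$. I first analyze the critical set of $\phi(\alpha;t,s)$. By the first variation of arclength,
\[\partial_t\phi = -\epsilon - \cos\theta_1(t,s), \qquad \partial_s\phi = \epsilon + \cos\theta_2(t,s),\]
where $\theta_1(t,s)$ is the angle at $\tilde\gamma(t)$ between $\tilde\gamma'(t)$ and the initial tangent of the minimizing geodesic to $\alpha(\tilde\gamma(s))$, and $\theta_2(t,s)$ is the analogous outgoing angle at $\alpha(\tilde\gamma(s))$. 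A full critical point corresponds to the equal-angle condition $\cos\theta_1=\cos\theta_2=-\epsilon$; in flat space this says $\tilde\gamma$, $\alpha(\tilde\gamma)$, and the connecting geodesic form part of a geodesic parallelogram, so shifting both endpoints by the same arclength preserves the distance and the Hessian of $\phi$ is degenerate along a one-dimensional critical set. It is precisely this parallelogram degeneracy that the curvature must quantitatively break.

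To bypass the degeneracy I first apply stationary phase in $s$ for each fixed $t$. By the convexity of distance in nonpositive curvature, $\partial_s^2\phi \gtrsim (1-c_0^2)/r$ on the support of $b$ (with $r = d_{\tilde g}(\tilde\gamma(t),\alpha(\tilde\gamma(s)))$), so there is a unique $s_*(t)$ with $\partial_s\phi=0$, and the $s$-integration contributes $\lambda^{-1/2}r^{1/2}$, leaving a one-dimensional oscillatory integral in $t$ with reduced phase $\psi(t):=\phi(\alpha;t,s_*(t))$. By the envelope theorem,
\[\psi'(t) = \partial_t\phi(\alpha;t,s_*(t)) = \cos\theta_2(t,s_*(t)) - \cos\theta_1(t,s_*(t)),\]
and since both cosines lie near $-\epsilon\in(-c_0,c_0)$, the mean value theorem gives $|\psi'(t)|\asymp\sqrt{1-c_0^2}\,|\theta_1(t,s_*(t))-\theta_2(t,s_*(t))|$; up to a uniform constant, $\psi'(t)$ is the \emph{parallelogram defect} of the critical configuration above $t$.

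The heart of the proof is a Gauss--Bonnet identity. For any $(t_j,s_j)$, $j=1,2$, the geodesic quadrilateral with vertices $\tilde\gamma(t_1), \alpha(\tilde\gamma(s_1)), \alpha(\tilde\gamma(s_2)), \tilde\gamma(t_2)$ has interior angles equal to $\theta_1(t_1,s_1)$, $\pi-\theta_2(t_1,s_1)$, $\theta_2(t_2,s_2)$, $\pi-\theta_1(t_2,s_2)$, and Gauss--Bonnet yields
\[(\theta_1(t_1,s_1)-\theta_2(t_1,s_1)) - (\theta_1(t_2,s_2)-\theta_2(t_2,s_2)) = \int_R K\,dV_{\tilde g},\]
where $R$ is the enclosed region. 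Specializing $(t_j,s_j)$ to the critical curve $s=s_*(t)$, the left side equals the change in the parallelogram defect, and hence (up to a uniform factor) the change in $\psi'$. Decomposing $R$ into unit-radius geodesic balls and summing \eqref{i.2} cellwise yields $\int_R K\,dV_{\tilde g} \le -c|t_1-t_2|^a$ for some $a=a(k)>0$, forcing $|\psi'(t)|\gtrsim |t-t_0|^a$ near any zero $t_0$ of $\psi'$; consequently $E_\sigma:=\{t\in\text{supp}\,b : |\psi'(t)|<\sigma\}$ has measure $O(\sigma^{1/a})$.

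With this lower bound I finish the proof of \eqref{2.21}. Outside $E_\sigma$, integration by parts $N$ times against $e^{i\lambda\psi(t)}$ gives $O((\lambda\sigma)^{-N}\exp(C_M T)\,r^{1/2})$, the exponential factor absorbing the amplitude derivative bounds \eqref{5.4}--\eqref{5.5}; on $E_\sigma$ one uses \eqref{5.3} and the measure bound directly to get $O(\sigma^{1/a}r^{1/2})$. Choosing $\sigma=\lambda^{-\beta}$ with $\beta$ small, and then $c$ in $T=c\log\lambda$ small enough that $cC_M<N(1-\beta)-\sigma_M$ for some large $N$, the sum of the two contributions is $O(\lambda^{-\sigma_M})$ uniformly in $\alpha$, with $\sigma_M=\beta/a$. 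The main obstacle is the geometric lemma in the previous paragraph: the curvature integral must be bounded below in terms of $|t_1-t_2|$ alone, even when $r$ is as large as $T\asymp\log\lambda$, so the cellwise decomposition of the long, thin strip $R$ must be carried out without losing any factor of $r$. This is the parallelogram analog of the rectangle-avoidance lemma of \cite{Gauss}, and constitutes the geometric heart of the paper.
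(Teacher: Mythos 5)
Your geometric insight is correct and matches the paper's: the obstruction to a small integral is a geodesic parallelogram configuration, and Gauss--Bonnet on the geodesic quadrilateral with vertices $\tilde\gamma(t_1),\alpha(\tilde\gamma(s_1)),\alpha(\tilde\gamma(s_2)),\tilde\gamma(t_2)$ is precisely what quantitatively rules this out. But your analytic packaging differs from the paper's, and the difference is where your argument has a real gap. The paper never reduces to a one-dimensional phase $\psi(t)=\phi(\alpha;t,s_*(t))$; instead it establishes a dichotomy on the full gradient $\nabla_{t,s}\phi$ at a single scale $\lambda^{-\e_0}$, $\e_0=1/(5k)$: either no near-critical point exists, or there is one at $(t_0,s_0)$, and then Proposition~\ref{propg.1} shows $|\nabla_{t,s}\phi|\gtrsim\lambda^{-1/4}$ outside the $\lambda^{-\e_0}$-box around $(t_0,s_0)$, after which the modified stationary phase Lemma~\ref{stationary} (which tolerates amplitude derivatives of size $\lambda^{\sigma/2}$) finishes the inner box.

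The concrete gap in your version is the claim that ``decomposing $R$ into unit-radius geodesic balls and summing \eqref{i.2} cellwise yields $\int_R K\,dV_{\tilde g}\le -c|t_1-t_2|^a$.'' When $|t_1-t_2|\ll1$ the strip $R$ has width comparable to $|t_1-t_2|$ and no unit ball fits inside it, so the cellwise sum gives nothing. What the paper does instead (and what you must do) is inscribe a \emph{single} geodesic ball of radius $\lambda^{-\e_0}/A$, tangent to $\tilde\gamma$ at the midpoint $\tilde\gamma((t+t_0)/2)$, and show it lies inside the quadrilateral once $|t-t_0|\ge\lambda^{-\e_0}$; then \eqref{i.2} gives defect $\ge\delta'\lambda^{-k\e_0}=\delta'\lambda^{-1/5}$, which beats the $\lambda^{-1/3}$ near-criticality tolerance. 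This is a dichotomy at one fixed scale, not a pointwise lower bound $|\psi'(t)|\gtrsim|t-t_0|^a$, and the latter is both unnecessary and harder to prove (it would also require controlling the number of zeros of $\psi'$, or arguing monotonicity of $\psi'$ along the critical curve from the sign of the curvature integral, neither of which you address). Two secondary issues: your lower bound $\partial_s^2\phi\gtrsim(1-c_0^2)/r$ requires $\sin^2\theta_2\gtrsim1-c_0^2$, which holds only near $\partial_s\phi=0$, so the $s$-integral needs a further split before stationary phase applies; and after eliminating $s$, the derivatives of the reduced amplitude inherit the $\exp(CT)=\lambda^{cC}$ growth from \eqref{5.4}--\eqref{5.5} and from $\partial_s^j\phi$, so you need the flexible stationary phase lemma (Lemma~\ref{stationary}) or a careful choice of $c$ at every integration by parts --- you mention this but do not carry the bookkeeping through. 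Filling these in would make your route viable, but as written the single-ball geometric lemma is missing and replaced by a decomposition that does not work.
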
 
		To prove Proposition \ref{prop2.1}, we shall need some geometric tools which will provide us favorable estimates for the various derivatives of our phase function $\phi(\alpha;t,s)$. The key idea is that the Gauss-Bonnet Theorem serves as a  tool to quantitatively eliminate the possibility of having non-trivial geodesic parallelograms on the universal covers, which is the only case in which we would have more than one pair of critical points for the phase function $\phi(\alpha;t,s)$.

		\section{Gauss-Bonnet Theorem and Phase Function Bounds}
			 \begin{figure}
			\centering
			\includegraphics[width=.65\textwidth]{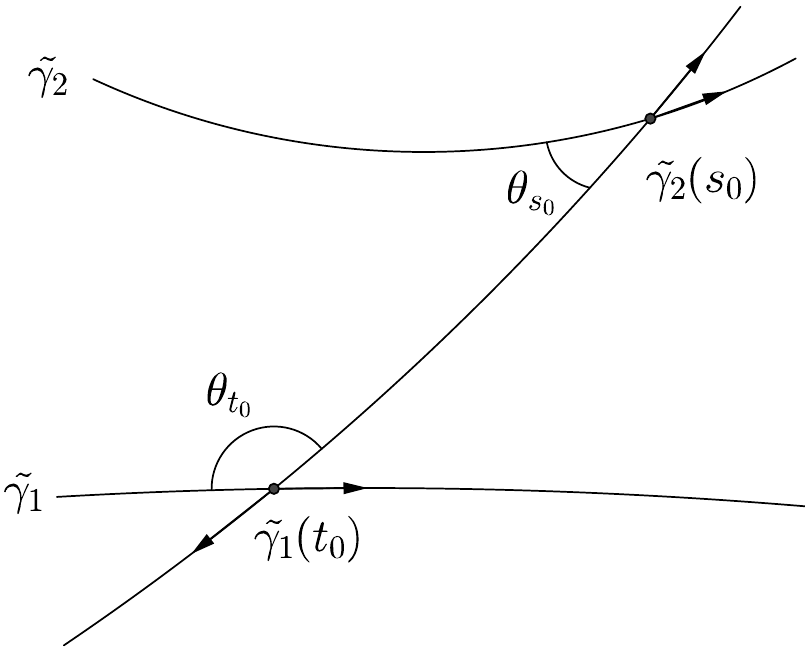}
			\caption{}
			\label{fig1}
		\end{figure}
		Let us now state the two geometric results that will play a key role in our proof. 	From now on, we shall take
		\begin{equation}\label{g.3}
		\e_0=\frac{1}{5k}
				\end{equation}
		where $k$ is the constant in \eqref{i.2}.

		\begin{proposition}\label{propg.1}  Let $\tilde \gamma_1(t)$ and $\tilde \gamma_2(s)$, $|s|, \, |t|\le 1/2$ be two  unit length geodesics in $({\mathbb R}^2,\tilde g)$ parameterized by arc-length, satisfying that $d_{\tilde g}(\tidle \gamma_1(t),\, \tilde \gamma_2(s))\ge 1$, $|t|, \, |s| \le 1/2$.  Let $\theta_{t}$ denote the angle made by the two unit vectors $\nabla_xd_{\tilde g}(\tilde \gamma_1(t),\tilde \gamma_2(s))$ and $\frac d{dt}{\tilde \gamma_1}(t)$. Similarly, let $\theta_{s}$ be the angle between the  two unit vectors $\nabla_yd_{\tilde g}(\tilde \gamma_1(t),\tilde \gamma_2(s))$ and $\frac d{ds}{\tilde \gamma_2}(s)$, see Figure \ref{fig1}. Fix a number $\epsilon$, such that $0\le\epsilon<c_0<1$.
	If we suppose further that there exists $(t_0,s_0)\in [-1/2,1/2]\times [-1/2,1/2]$ such that
			\begin{equation}\label{g.2}
			|\theta_{t_0}-\arccos\epsilon|+|\theta_{s_0}-\arccos(-\epsilon)| <\lambda^{-1/3},
			\end{equation}
			and if $\lambda$ is larger than a fixed constant, we have
			\begin{multline}\label{g.4}
			\max \bigl(|\theta_t-\arccos\epsilon|, \, |\theta_s-\arccos(-\epsilon)| \bigr) \ge \la^{-1/4}, 
			\\
			\text{if } \, \, t,s\in [-1/2,1/2] \, \, \text{and } \,  \max\bigl(|t-t_0|,\, |s-s_0|\bigr)\ge \la^{-\e_0}.
			\end{multline}
		\end{proposition}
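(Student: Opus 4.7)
The plan is to argue by contradiction, using the Gauss-Bonnet theorem applied to a geodesic parallelogram. Suppose the conclusion fails: there exists $(t_1, s_1) \in [-1/2, 1/2]^2$ with $\max(|t_1 - t_0|, |s_1 - s_0|) \ge \la^{-\e_0}$ yet $\max(|\theta_{t_1} - \arccos\epsilon|, |\theta_{s_1} - \arccos(-\epsilon)|) < \la^{-1/4}$. Write $P_i = \tilde\gamma_1(t_i)$ and $Q_i = \tilde\gamma_2(s_i)$ for $i = 0, 1$. Differentiating the phase $\phi(t, s) = \epsilon(s - t) + d_{\tilde g}(\tilde\gamma_1(t), \tilde\gamma_2(s))$ gives $\partial_t \phi = -\epsilon + \cos\theta_t$ and $\partial_s \phi = \epsilon + \cos\theta_s$, so the angle hypotheses at $(t_0, s_0)$ and $(t_1, s_1)$ are precisely near-critical conditions for $\phi$. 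In flat geometry two distinct such critical pairs would close up into a Euclidean parallelogram; in nonpositive curvature the Gauss-Bonnet defect will quantitatively obstruct this.

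Next, form the geodesic quadrilateral $\Pi$ with cyclic vertices $P_0, P_1, Q_1, Q_0$, whose four sides are the segment of $\tilde\gamma_1$ from $P_0$ to $P_1$, the minimizing diagonal $P_1 Q_1$ (length $\ge 1$), the segment of $\tilde\gamma_2$ from $Q_1$ to $Q_0$, and the minimizing diagonal $Q_0 P_0$ (length $\ge 1$). A short computation using the definitions of $\theta_{t_i}, \theta_{s_i}$ shows that in the flat Euclidean model the four interior angles of $\Pi$ consist of $\arccos\epsilon$ and $\arccos(-\epsilon)$, each appearing twice, and hence sum to $2\pi$. Under \eqref{g.2} and the assumed bound at $(t_1, s_1)$, each interior angle of $\Pi$ deviates from its Euclidean value by at most $C\la^{-1/4}$, so the Gauss-Bonnet identity
\[
\sum_{j=1}^4 \alpha_j = 2\pi + \int_\Pi K \, dV_{\tilde g}
\]
yields $\bigl|\int_\Pi K \, dV_{\tilde g}\bigr| \le C\la^{-1/4}$. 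In the degenerate case $s_0 = s_1$ or $t_0 = t_1$, $\Pi$ collapses to a geodesic triangle whose two surviving near-critical angles still sum to $\pi$ up to $O(\la^{-1/4})$, and since $K \le 0$ and the third angle is nonnegative, the triangle version of Gauss-Bonnet yields the same upper bound.

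For the matching lower bound, I invoke the averaged vanishing condition \eqref{i.2}. Since $\epsilon < c_0 < 1$, the angles $\arccos(\pm\epsilon)$ are uniformly bounded away from $0$ and $\pi$ by a constant depending only on $c_0$. Combined with $\max(|t_1 - t_0|, |s_1 - s_0|) \ge \la^{-\e_0}$ and the length-$\ge 1$ diagonals, an elementary comparison argument (using that nonpositive curvature makes geodesic triangles at least as fat as their Euclidean comparison triangles) shows that $\Pi$ contains a $\tilde g$-geodesic ball $B_r$ of radius $r \ge c\la^{-\e_0}$ for some $c = c(c_0) > 0$. The key observation is that for a Euclidean triangle with one short side of length $a \sim \la^{-\e_0}$ and two long sides meeting at a small vertex angle $\sim a/L$, the inradius is $\sim a/2$ independently of $L$, so no logarithmic factor enters. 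Since $K \le 0$ throughout,
\[
\Bigl|\int_\Pi K \, dV_{\tilde g}\Bigr| \ge \Bigl|\int_{B_r} K \, dV_{\tilde g}\Bigr| \ge \delta r^k \ge c' \la^{-\e_0 k} = c' \la^{-1/5}.
\]
Since $\e_0 k = 1/5 < 1/4$, combining the upper and lower bounds forces $c'\la^{-1/5} \le C\la^{-1/4}$, which fails for $\la$ large enough, completing the contradiction.

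The main difficulty I anticipate is carefully verifying the inscribed-ball claim across the range of possible shapes of $\Pi$, particularly in the near-degenerate regime $\min(|t_1 - t_0|, |s_1 - s_0|) \ll \la^{-\e_0}$ in which $\Pi$ resembles a long, thin triangle. This is exactly where the strict frequency gap $\epsilon < c_0 < 1$ enters essentially: it keeps the relevant interior angles bounded away from $0$ and $\pi$, so that $\Pi$ retains an inscribed geodesic disk of radius $\sim \la^{-\e_0}$ no matter how long its diagonals become.
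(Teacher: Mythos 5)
Your overall strategy — Gauss-Bonnet on a geodesic quadrilateral plus an inscribed ball of radius $\sim\la^{-\e_0}$ and the averaged vanishing condition — is exactly the paper's, and the $\la^{-1/5}$ vs.\ $\la^{-1/4}$ numerology at the end matches. However, there is a genuine gap: you only treat the case where the cyclic polygon $P_0 P_1 Q_1 Q_0$ is simple. The two "long" sides, the geodesic from $P_0=\tilde\gamma_1(t_0)$ to $Q_0=\tilde\gamma_2(s_0)$ and the one from $P_1=\tilde\gamma_1(t_1)$ to $Q_1=\tilde\gamma_2(s_1)$, are \emph{opposite} sides of your $\Pi$ and may perfectly well intersect (this happens, roughly, whenever $t_1-t_0$ and $s_1-s_0$ have opposite signs). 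In that case $\Pi$ is a non-simple ``bowtie'' and Gauss-Bonnet does not apply to it; the paper treats this case separately by introducing the intersection point $P$ as a new vertex and running Gauss-Bonnet on the geodesic \emph{triangle} $\tilde\gamma_1(t_0)\,\tilde\gamma_1(t)\,P$, whose short side along $\tilde\gamma_1$ has length $\ge\la^{-\e_0}$ and therefore still admits the inscribed ball. Your proposal never addresses this, and there is no way to avoid it: the dichotomy ``segments intersect / do not intersect'' is forced by the Cartan-Hadamard theorem and both branches occur.

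Two smaller but real issues. First, your claim that ``the four interior angles of $\Pi$ consist of $\arccos\epsilon$ and $\arccos(-\epsilon)$, each appearing twice'' sweeps under the rug that each interior angle $\alpha_{t_i}$ (resp.\ $\alpha_{s_i}$) equals either $\theta_{t_i}$ or $\pi-\theta_{t_i}$ depending on which side of the geodesic $\tilde\gamma_1$ (resp.\ $\tilde\gamma_2$) the quadrilateral lies; the paper tracks these orientation cases explicitly (reducing, after possibly flipping $\gamma$, to two sub-cases) precisely because they change how the hypotheses \eqref{g.2} feed into the defect. Without this bookkeeping your ``angles sum to $2\pi+O(\la^{-1/4})$'' step is not justified. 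Second, the inscribed-ball claim is asserted rather than arranged: the paper places a specific ball tangent to $\tilde\gamma_1$ at the midpoint $\tilde\gamma_1((t+t_0)/2)$ on the $Q$-side, and when this ball does \emph{not} fit inside $Q$ it observes that $\theta_t$ must then be within $O(\la^{-\e_0})$ of $\pi$, which already yields \eqref{g.4} trivially; your argument needs this escape hatch (or an equivalent) to be airtight in the thin-triangle regime you yourself flag as the main difficulty.
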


		To prove Proposition~\ref{propg.1} we shall use a couple of special cases for the Gauss-Bonnet Theorem (see \cite{doCarmo})  concerning the sum of the interior angles
		$\alpha_j$ for geodesic quadrilaterals $Q$ and geodesic triangles ${\mathcal T}$ in $({\mathbb R}^2,\tg)$.  In the first case we define the ``defect'' of $Q$, $\text{Defect }Q$,
		to be $2\pi$ minus the sum of the four interior angles at the vertices, and in the case of ${\mathcal T}$, we define  $\text{Defect }{\mathcal T}$ to be $\pi$ minus the sum of its
		three interior angles, as shown in Figure~\ref{fig2}.
		
						 \begin{figure}
			\centering
			\includegraphics[width=0.95\textwidth]{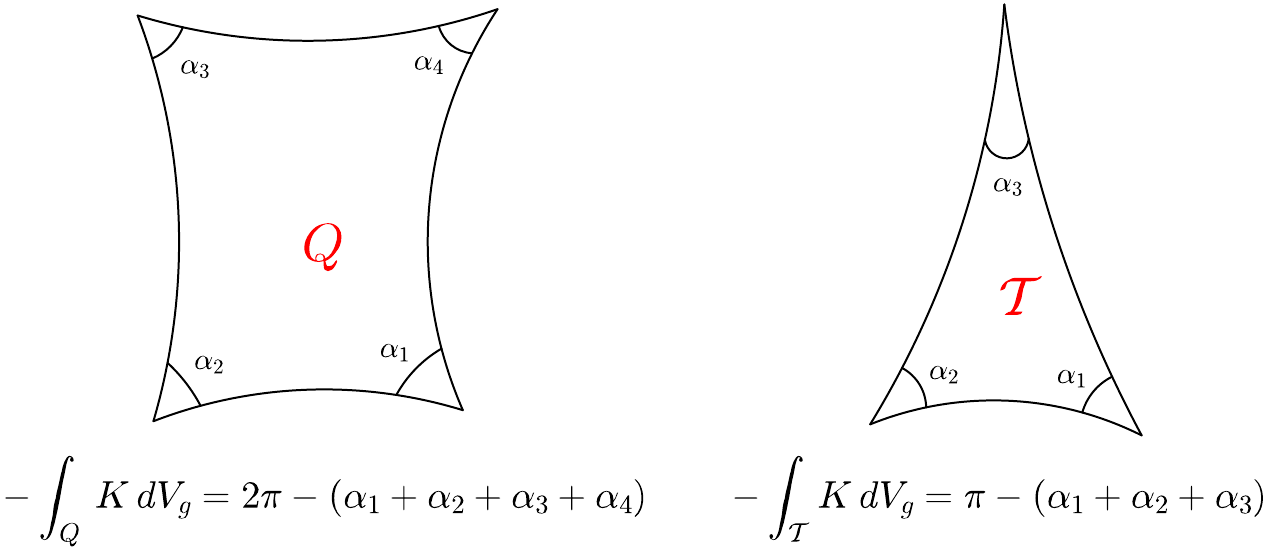}
			\caption{Gauss-Bonnet Theorem}
			\label{fig2}
		\end{figure}
		
		Then, the Gauss-Bonnet Theorem gives that
		\begin{gather*}
		\text{Defect }Q=-\int_Q K\, dV,
		\\
		\text{Defect }{\mathcal T}=-\int_{\mathcal T} K\, dV.
		\end{gather*}

		\begin{proof}[Proof of Proposition~\ref{propg.1}]
			Suppose that, given $(t_0,s_0)\in [-1/2,1/2]\times [-1/2,1/2]$, \eqref{g.2} is valid.  By symmetry it suffices to show
			that the conclusion in \eqref{g.4} is valid if we assume that $t\in [-1/2,1/2]\, \backslash \, (t_0-\la^{-\e_0},\, t_0+\la^{-\e_0})$ and
			$|s|\le 1/2$.  If $s\ne s_0$ there are two cases as shown in Figure~\ref{fig3} and Figure \ref{fig4}:  Either the geodesic segment connecting 
			$\tilde \gamma_1(t_0)$ and $\tilde \gamma_2(s_0)$ and the one connecting $\tilde \gamma_1(t)$ and $\tilde \gamma_2(s)$ intersect
			or not.  In the first case we obtain a geodesic quadrilateral $Q$ with vertices $\tilde \gamma_1(t_0), \, \tilde \gamma_1(t), \, \tilde\gamma_2(s_0)$
			and $\tilde\gamma_2(s)$, while in the other case we obtain two geodesic triangles using those four points  and the intersection
			point of the aforementioned geodesic segments.  To reach this conclusion we are using the fact that since we are assuming $K\le 0$,
			two geodesics in $({\mathbb R}^2,\tilde g)$ are disjoint or intersect at exactly one point by the Cartan-Hadamard theorem.
			
			In the first case, let $\alpha_{t_0}, \alpha_t,\alpha_{s_0}$ and $\alpha_s$ denote the interior angles of the geodesic quadrilateral $Q$
			at vertices $\tilde \gamma_1(t_0), \, \tilde \gamma_1(t), \, \tilde \gamma_2(s_0)$ and $\tilde \gamma_2(s)$, respectively.  Note that for the two interior angles $\alpha_{t_0}$ and $\alpha_{s_0}$, we have four possibilities depending on the orientations of $\tilde\gamma$ and $\alpha(\tilde{\gamma})$, i.e. the combinations of $\alpha_{t_0}=\theta_{t_0 }\text{ or }\pi-\theta_{t_0}$ and $\alpha_{s_0}=\theta_{s_0} \text{ or }\pi-\theta_{s_0}$. By possibly reversing the orientation of $\gamma$, we only need to consider two cases:  $\alpha_{t_0}=\theta_{t_0 }$, $\alpha_{s_0}=\pi-\theta_{s_0}$, or $\alpha_{t_0}=\theta_{t_0 }$, $\alpha_{s_0}=\theta_{s_0}$.
			
		1.	Now suppose that $\alpha_{s_0}=\pi-\theta_{s_0}$, and $\alpha_{t_0}=\theta_{t_0 }$ and $|t-t_0|>\la^{-\epsilon_0}$. Then by the Gauss-Bonnet theorem
			\begin{equation}\label{g.6}
			\text{Defect }Q=2\pi -\bigl(\alpha_{t_0}+\alpha_t+\alpha_{s_0}+\alpha_s\bigr)=-\int_QK\, dV.
			\end{equation}
			As in the first picture of Figure~\ref{fig3}, if we consider the geodesic ball, $B_r$, $r=\la^{-\e_0}/A$, which is tangent to $\tilde \gamma_1$ at $\tilde \gamma_1((t+t_0)/2)$ and
			on the same side of $\tilde \gamma_1$ as $Q$, it follows that, if $\la\gg1$, and $A$ is larger than a fixed constant depending on the metric and $c_0$, then we have
			$B_r\subset Q$.  We may make this assumption since otherwise we have that $\theta_t=\pi-\alpha_t$ is arbitrarily close to $\pi$, which trivially implies \eqref{g.4}.
			Now since $K\le0$, we have for large enough $\la$
			$$\text{Defect }Q=2\pi -\bigl(\alpha_{t_0}+\alpha_t+\alpha_{s_0}+\alpha_s\bigr)\ge -\int_{B_r}K\, dV \ge \delta' \la^{-k\e_0}=\delta' \la^{-1/5},$$
			where $\delta'=A^{-k}\delta$ for $\delta>0$ as in \eqref{i.2}.  Since we are assuming \eqref{g.2} we must have 	$|\alpha_{t_0}-\arccos\epsilon|, |\alpha_{s_0}-\arccos\epsilon|<\lambda^{-1/3},$ and
			notice that $\pi=\arccos\epsilon+\arccos(-\epsilon)$, we have
			\begin{multline}\label{pi-'}2\arccos(-\epsilon)-\alpha_t-\alpha_s\ge\alpha_{t_0}+\alpha_{s_0}-2\arccos\epsilon+\delta'\la^{-1/5}\\=\alpha_{t_0}-\arccos\epsilon+\alpha_{s_0}-\arccos\epsilon+\delta'\la^{-1/5}\ge-2\la^{-1/3}+\delta'\la^{-1/5}\ge \tfrac{\delta'}2 \la^{-1/5} \quad \text{if } \, \, \, \la \gg 1.\end{multline}
			Since $\theta_t=\pi-\alpha_t$, $\theta_s=\alpha_s$, we have $$2\arccos(-\epsilon)-\alpha_t-\alpha_s=2\arccos(-\epsilon)-\pi+\theta_t-\theta_s-\pi=\theta_t-\arccos\epsilon-(\theta_s-\arccos(-\epsilon)).$$
			Therefore, \eqref{pi-'} implies that
			\begin{equation}\label{g.7}
			\max \bigl(|\theta_t-\arccos\epsilon|, \, |\theta_s-\arccos(-\epsilon)| \bigr) \ge \la^{-1/4}, 
			\end{equation}
			if $\la$ is larger than a fixed constant which is independent of our two geodesic segments $\tilde \gamma_1$ and $\tilde \gamma_2$.
			
				 \begin{figure}[!ht]
				\centering
				\includegraphics[width=1\textwidth]{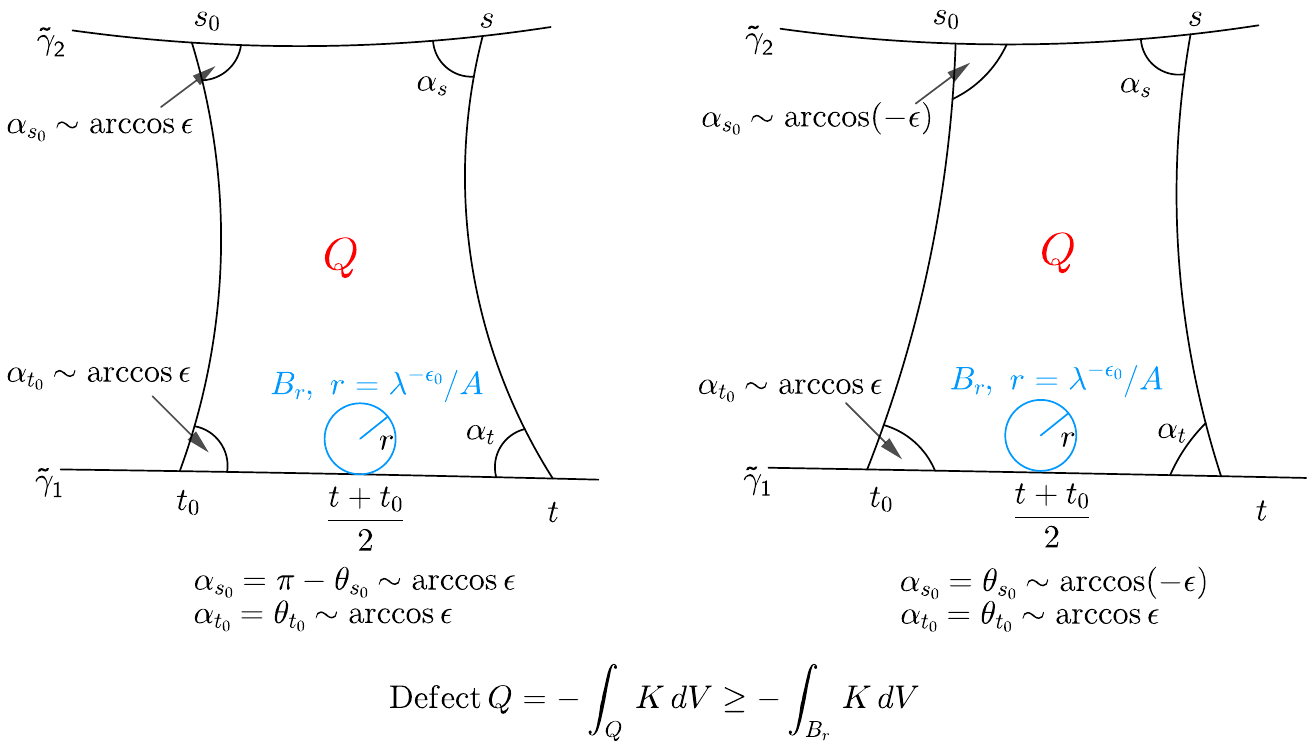}
				\caption{Non-intersecting Case}
				\label{fig3}
			\end{figure}
			
			2.	Now suppose that we are in the case that, $\alpha_{s_0}=\theta_{s_0}$, and $\alpha_{t_0}=\theta_{t_0 }$, again by Gauss-Bonnet Theorem,
			\begin{equation}\label{g.6'}
			\text{Defect }Q=2\pi -\bigl(\alpha_{t_0}+\alpha_t+\alpha_{s_0}+\alpha_s\bigr)=-\int_QK\, dV,
			\end{equation}
			see the second picture in Figure \ref{fig3}. Then by the same argument as above, the geodesic ball, $B_r$, $r=\la^{-\e_0}/A$, will be contained in $Q$ if $\la$ and $A$ are sufficiently large. Therefore, we have 
			$$\text{Defect }Q=2\pi -\bigl(\alpha_{t_0}+\alpha_t+\alpha_{s_0}+\alpha_s\bigr)\ge -\int_{B_r}K\, dV \ge \delta' \la^{-k\e_0}=\delta' \la^{-1/5},$$
			where $\delta'=A^{-k}\delta$ for $\delta>0$ as in \eqref{i.2}.  
			
			 Since we are assuming \eqref{g.2} we must have 	$|\alpha_{t_0}-\arccos\epsilon|, |\alpha_{s_0}-\arccos\epsilon|<\lambda^{-1/3},$ and
			notice that $\pi=\arccos\epsilon+\arccos(-\epsilon)$, we have
			\begin{multline}\label{pi-''}2\arccos(-\epsilon)-\alpha_t-\alpha_s\ge\alpha_{t_0}+\alpha_{s_0}-2\arccos\epsilon+\delta'\la^{-1/5}\\=\alpha_{t_0}-\arccos\epsilon+\alpha_{s_0}-\arccos\epsilon+\delta'\la^{-1/5}\ge-2\la^{-1/3}+\delta'\la^{-1/5}\ge \tfrac{\delta'}2 \la^{-1/5}, \quad \text{if } \, \la \gg 1.\end{multline}
			Notice that $\theta_t=\pi-\alpha_t$, $\theta_s=\alpha_s$, we have $$2\arccos(-\epsilon)-\alpha_t-\alpha_s=2\arccos(-\epsilon)-\pi+\theta_t-\theta_s-\pi=\theta_t-\arccos\epsilon-(\theta_s-\arccos(-\epsilon)).$$
			Therefore, \eqref{pi-''} implies that
			\begin{equation}\label{g.7'}
			\max \bigl(|\theta_t-\arccos\epsilon|, \, |\theta_s-\arccos(-\epsilon)| \bigr) \ge \la^{-1/4}, 
			\end{equation}
			if $\la$ is larger than a fixed constant depending only on the metric.

		Now we are left with the case when the geodesics connecting
			$\tilde \gamma_1(t_0)$ and $\tilde \gamma_2(s_0)$ and the one connecting $\tilde \gamma_1(t)$ and $\tilde \gamma_2(s)$ intersect at a point $P$.  Recall that we are assuming $|t-t_0|>\la^{-\epsilon_0}$. Let us consider the geodesic triangle ${\cal T}$ with vertice $\tilde \gamma_1(t_0), \,\tilde \gamma_1(t)$ and $P$.  If
			$\alpha_{t_0}, \, \alpha_t$ and $\alpha_P$ are the corresponding interior angles for ${\mathcal T}$, as before, if $\la,\, A$ are large enough the geodesic
			ball $B_r$, $r=\la^{-\e_0}/A$, which is tangent to $\tilde \gamma_1$ at $\tilde \gamma_1((t+t_0)/2)$ and on the same side as ${\mathcal T}$ must be contained
			in ${\mathcal T}$. Therefore, if $\delta'=A^{-k}\delta$ for $\delta>0$ as in \eqref{i.2}, we have 
			$$\pi-(\alpha_{t_0}+\alpha_t+\alpha_P)=-\int_{\mathcal T} K\, dV\ge -\int_{B_r} K\, dV \ge \delta' \la^{-k\e_0}=\delta' \la^{-1/5}.$$
			See Figure \ref{fig4}.
			 \begin{figure}[!ht]
				\centering
				\includegraphics[width=0.7\textwidth]{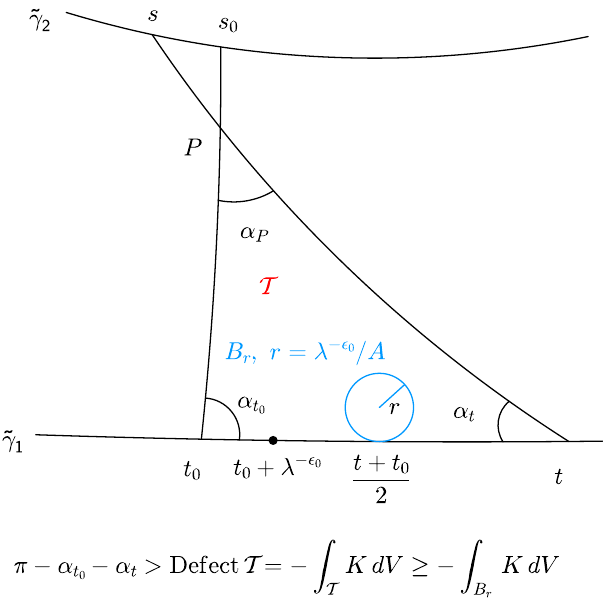}
				\caption{Intersecting case}
				\label{fig4}
			\end{figure}
			Thus, on one hand, if $\alpha_{t_0}=\theta_{t_0}$, $\alpha_{t}=\pi-\theta_{t}$ 
			\begin{multline}\theta_t-\arccos\epsilon=\arccos\epsilon-\alpha_t\ge\arccos\epsilon-(\alpha_t+\alpha_P)\ge\alpha_{t_0}-\arccos(-\epsilon)+\delta'\la^{-1/5}\\ = \theta_{t_0}-\arccos(-\epsilon)+\delta'\la^{-1/5}\ge\tfrac{\delta'}2\la^{-1/5}, \quad \text{if } \, \, \la\gg 1.\end{multline}
			On the other hand, if $\alpha_{t_0}=\pi-\theta_{t_0}$, $\alpha_{t}=\theta_{t},$
			 \begin{multline}\arccos\epsilon-\theta_t=\arccos\epsilon-(\alpha_t+\alpha_P)\ge\alpha_{t_0}-\arccos(-\epsilon)+\delta'\la^{-1/5}\\=\arccos\epsilon-\theta_{t_0}+ \delta'\la^{-1/5}\ge\tfrac{\delta'}2\la^{-1/5},\quad \text{if } \, \, \la\gg 1,
			 \end{multline}
			and therefore, in both cases, $|\theta_t-\arccos\epsilon|\ge \la^{-1/4}$, and thus \eqref{g.4} is valid as well, 
			completing the proof.\end{proof}

		A direct consequence of Proposition \ref{propg.1} is the following estimates for our phase function $\phi$ in the non-critical case:
				\begin{proposition}\label{prop5.3}  Let $\phi(\alpha;t,s)$  be as above with $\alpha \ne Id$.   Then if for $t_0,s_0\in [-1/2,1/2]$ and  $\la\gg 1$
			\begin{equation}\label{5.26}
			\bigl|\nabla_{t,s}\phi(\alpha;t_0,s_0)\bigr| \le \frac12 \la^{-1/3},
			\end{equation}
			it follows that if $\e_0$ is as in \eqref{g.3}, we have
			\begin{equation}\label{5.27}
			\bigl|\nabla_{t,s}\phi(\alpha;t,s)\bigr|\ge \frac12 \la^{-1/4}, \quad \text{if } \, \, \, 
			\max\bigl(|t-t_0|, \, |s-s_0|\bigr)\ge \la^{-\e_0} \, \, \text{and } \, |t|,|s|\le 1/2.
			\end{equation}
		\end{proposition}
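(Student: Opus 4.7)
The plan is to express $\nabla_{t,s}\phi$ directly in terms of the angles $\theta_t, \theta_s$ of Proposition~\ref{propg.1}, and then feed the gradient smallness hypothesis \eqref{5.26} into that proposition. Writing $\tilde\gamma_1(t) = \tilde\gamma(t)$ and $\tilde\gamma_2(s) = \alpha(\tilde\gamma(s))$ (the latter is a unit-speed geodesic because $\alpha$ is an isometry of $({\mathbb R}^2, \tilde g)$), the first variation formula for Riemannian distance gives
\[
\partial_t\phi(\alpha;t,s) = -\epsilon + \cos\theta_t, \qquad \partial_s\phi(\alpha;t,s) = \epsilon + \cos\theta_s.
\]
Because $\alpha \ne \text{Id}$ and $\text{Inj}\,M \ge 10$, we have $d_{\tilde g}(\tilde\gamma(t), \alpha(\tilde\gamma(s))) \ge 9$ for $|t|,|s| \le 1/2$, so the standing hypotheses of Proposition~\ref{propg.1} are met.

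Next, the smallness hypothesis \eqref{5.26} forces $|\cos\theta_{t_0} - \epsilon| \le \tfrac12\la^{-1/3}$ and $|\cos\theta_{s_0} + \epsilon| \le \tfrac12\la^{-1/3}$. Since $\pm\epsilon \in [-c_0, c_0] \subset (-1,1)$, the function $\arccos$ is smooth in a fixed neighborhood of both $\epsilon$ and $-\epsilon$ with derivative bounded by $(1-c_0^2)^{-1/2}$, so
\[
|\theta_{t_0} - \arccos\epsilon| + |\theta_{s_0} - \arccos(-\epsilon)| \le C_{c_0}\la^{-1/3}.
\]
This is the angular hypothesis \eqref{g.2}, up to a $c_0$-dependent constant that is harmless: inspection of the proof of Proposition~\ref{propg.1} shows that the right-hand side of \eqref{g.2} may be replaced by $C\la^{-1/3}$ for any fixed $C$, since the $\la^{-1/5}$ defect estimate from Gauss--Bonnet dominates $-2C\la^{-1/3}$ for $\la$ large. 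Proposition~\ref{propg.1} therefore yields
\[
\max\bigl(|\theta_t - \arccos\epsilon|,\, |\theta_s - \arccos(-\epsilon)|\bigr) \ge \la^{-1/4}
\]
whenever $\max(|t-t_0|, |s-s_0|) \ge \la^{-\e_0}$ and $|t|,|s| \le 1/2$.

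Finally, I reverse the Lipschitz passage. Since $\arccos(\pm\epsilon)$ lies in $[\arccos c_0,\, \pi - \arccos c_0] \subset (0,\pi)$, the derivative $-\sin\theta$ of $\cos$ has absolute value at least $\sqrt{1-c_0^2}$ on a fixed neighborhood of each, so by the mean value theorem
\[
|\cos\theta - \epsilon| \ge \tfrac12\sqrt{1-c_0^2}\,|\theta - \arccos\epsilon|
\]
on a fixed neighborhood of $\arccos\epsilon$, and analogously at $\arccos(-\epsilon)$; outside these neighborhoods both quantities are bounded below by a positive $c_0$-dependent constant. In either regime the angular gap $\ge\la^{-1/4}$ translates into $\max(|\partial_t\phi|,|\partial_s\phi|) \ge c_{c_0}\,\la^{-1/4}$, which is \eqref{5.27} after absorbing $c_{c_0}$ into the multiplicative constant (the $\tfrac12$ in \eqref{5.27} is a matter of normalization and may be replaced by any fixed $c_0$-dependent constant throughout the subsequent argument). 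The only real obstacle is careful bookkeeping of $c_0$-dependent constants; all the geometric content resides in Proposition~\ref{propg.1}.
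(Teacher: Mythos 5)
Your proof is correct and follows the same route as the paper's: write $\partial_t\phi = \cos\theta_t - \epsilon$, $\partial_s\phi = \cos\theta_s + \epsilon$ via the first variation formula and then invoke Proposition~\ref{propg.1}. The paper's own proof consists of just these two sentences and says "one immediately sees," suppressing the bookkeeping. Your extra care is well placed: there genuinely is a $c_0$-dependent constant discrepancy that the paper glosses over. Going from $|\cos\theta_{t_0}-\epsilon|\le\tfrac12\la^{-1/3}$ to the angular hypothesis \eqref{g.2} costs a Lipschitz factor $(1-c_0^2)^{-1/2}$ which can be large, and the reverse Lipschitz passage from \eqref{g.4} to a lower bound on $|\cos\theta_t-\epsilon|$ picks up a factor $\sqrt{1-c_0^2}$ which can be small; so the literal constants in \eqref{5.26} and \eqref{5.27} are not compatible with \eqref{g.2} and \eqref{g.4} as stated. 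You resolve this correctly on both ends: the right-hand side of \eqref{g.2} may be replaced by $C\la^{-1/3}$ for any fixed $C$ because in the proof of Proposition~\ref{propg.1} the Gauss--Bonnet defect $\delta'\la^{-1/5}$ dominates any fixed multiple of $\la^{-1/3}$; and the $\tfrac12$ in \eqref{5.27} may be replaced by any fixed $c_0$-dependent positive constant, since in the subsequent integration-by-parts argument only the power of $\la$ matters. This is a faithful and slightly more scrupulous rendering of the paper's argument, not a different one.
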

		\begin{proof}
		Note that by \eqref{5.18}, if we take $\tilde\gamma$ and $\alpha(\tilde\gamma)$ to be $\tilde\gamma_1$ and $\tilde\gamma_2$ respectively, $\partial_t\phi(\alpha;t,s)=\cos \theta_t-\epsilon$, and $\partial_s \phi(\alpha;t,s)=\cos \theta_s+\epsilon$, where $\theta_t$ and
		$\theta_s$ are as in Proposition~\ref{propg.1}.  From this one immediately sees that Proposition~\ref{prop5.3} is a consequence of
		Proposition~\ref{propg.1} .  \end{proof}
		
	Proposition \ref{prop5.3} provides  a nice lower bound for the gradient of our phase function in the case that $s,\,t$ are quantitatively away from a pair of critical points. Thus it remains to obtain a suitable lower bound for the second derivatives when we are at the near-critical situation. Such a lower bound has been proven in \cite{Gauss} for a similar situation.
		
		\begin{proposition}[{\cite[Proposition 5.2]{Gauss}}]\label{prop5.2}  Let $\phi_\pm(\alpha;t,s)$ be as in \eqref{5.18} with $\alpha\ne Id$.  Then for each $j=1,2,3,\dots$ there is a constant $C_j$ so that
			\begin{equation}\label{5.19}
			|\partial_t^j\phi(\alpha;t,s)|+|\partial_s^j\phi(\alpha;t,s)|\le \exp(C_jT),
		\quad\text{if } \, \,d_{\tilde g}(\tilde \gamma(t),\alpha(\tilde \gamma(s)))\le T.
			\end{equation}
			Moreover, we have the uniform bounds
			\begin{equation}\label{5.20}
			|\partial_t\partial_s\phi(\alpha;t,s)|\le C.
			\end{equation}
			Additionally,
			\begin{equation}\label{5.21}
			\partial^2_t\phi(\alpha;t_0,s_0) \ge \exp(-CT), 
			\end{equation}
			for some $C$ if
			\begin{equation}\label{5.22}
			d_{\tilde g}(\tilde \gamma(t_0),\alpha(\tilde \gamma(s_0)))\le T	\quad \text{and } \, \, |\partial_t\phi(\alpha;t_0,s_0)|<c_1,
			\end{equation}
			where $c_1>0$ is a constant that only depends on $c_0$.
		\end{proposition}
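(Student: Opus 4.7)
The plan is to reduce all three claims to analogous statements about the Riemannian distance function $d_{\tilde g}$ on the universal cover $(\R^2, \tg)$. Writing $\phi(\alpha; t, s) = \epsilon(s - t) + d_{\tilde g}(\tilde\gamma(t), \alpha(\tilde\gamma(s)))$, the linear piece $\epsilon(s - t)$ contributes only a bounded constant to $\partial_t \phi$ and $\partial_s \phi$, and nothing to any mixed or higher pure derivative. Thus \eqref{5.19}, \eqref{5.20} and \eqref{5.21} reduce to corresponding estimates for the various derivatives of $d_{\tilde g}$ as its arguments vary along the geodesics $\tilde\gamma$ and $\alpha(\tilde\gamma)$.

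For the upper bounds \eqref{5.19}, I would invoke a Jacobi field analysis. Iterated $t$-derivatives of $d_{\tilde g}(\tilde\gamma(t), y)$ can be written as rational expressions in a basis of orthogonal Jacobi fields along the minimizing geodesic connecting $\tilde\gamma(t)$ to $y$, together with their covariant derivatives. Since the sectional curvature of the universal cover is bounded below by some $-K_0^2$ (by the compactness of $M$), Rauch's comparison theorem gives $|J(r)| \le C\sinh(K_0 r)$ for any such Jacobi field, uniformly for $r \le T$, producing the desired bound of the form $\exp(C_j T)$. The mixed-derivative bound \eqref{5.20} is of a different flavor: using the first variation formula $\partial_t d_{\tilde g} = \cos \theta_t$ and differentiating in $s$, one finds $\partial_t \partial_s d_{\tilde g} = -\sin\theta_t \cdot \partial_s \theta_t$, and $\partial_s \theta_t$ is controlled by the second fundamental form of the geodesic sphere centered at $\alpha(\tilde\gamma(s))$ and passing through $\tilde\gamma(t)$, which in nonpositive curvature is uniformly bounded as long as $d_{\tilde g}(\tilde\gamma(t), \alpha(\tilde\gamma(s))) \ge 1$.

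For the lower bound \eqref{5.21}, I would apply the Hessian comparison theorem. In curvature bounded below by $-K_0^2$,
\begin{equation*}
\mathrm{Hess}_x\bigl(d_{\tilde g}(\cdot, y)\bigr)(v,v) \ge K_0 \coth\bigl(K_0 d_{\tilde g}(x, y)\bigr) \, |v_{\perp}|^2,
\end{equation*}
for unit $v \in T_x \R^2$ with perpendicular component $v_{\perp}$ relative to $\nabla_x d_{\tilde g}$. Taking $x = \tilde\gamma(t_0)$, $y = \alpha(\tilde\gamma(s_0))$, $v = \dot{\tilde\gamma}(t_0)$, we have $|v_\perp|^2 = \sin^2\theta_{t_0}$, and so
\begin{equation*}
\partial_t^2 \phi(\alpha; t_0, s_0) = \partial_t^2 d_{\tilde g} \ge K_0 \coth(K_0 T) \sin^2\theta_{t_0}.
\end{equation*}
The hypothesis $|\partial_t \phi(\alpha; t_0, s_0)| < c_1$ translates into $|\cos\theta_{t_0} - \epsilon| < c_1$, and since $\epsilon < c_0 < 1$, choosing $c_1 < (1 - c_0)/2$ yields $|\cos\theta_{t_0}| \le (1 + c_0)/2 < 1$, whence $\sin^2\theta_{t_0}$ is bounded below by a positive constant depending only on $c_0$. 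Since $K_0 \coth(K_0 T) \ge K_0$ uniformly for $T \ge 1$, we obtain a uniform lower bound that is in fact independent of $T$, hence far stronger than the stated $\exp(-CT)$.

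The main obstacle I anticipate lies in extracting a truly uniform constant in \eqref{5.20}, since this requires controlling a Jacobi-field quantity whose bound does not follow directly from Rauch comparison but rather from the more subtle fact that in nonpositive curvature geodesic spheres of radius $\ge 1$ are strictly convex with bounded second fundamental form. Beyond this, the argument closely parallels the approach in \cite{Gauss}, which may be invoked directly.
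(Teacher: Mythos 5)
Your overall strategy matches the paper's, which defers the proof to \cite[Proposition 5.2]{Gauss} after noting that $\phi$ differs from the phase there only by a term linear in $(t,s)$, and you correctly isolate the one genuinely new point: the hypothesis $|\partial_t\phi|<c_1$, combined with $\epsilon<c_0<1$, forces $|\cos\theta_{t_0}|<c_0+c_1<1$ and hence $\sin^2\theta_{t_0}$ bounded below by a constant depending only on $c_0$. This is exactly what the paper singles out as the essential adaptation.

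However, the Hessian comparison you invoke for \eqref{5.21} runs in the wrong direction. To obtain a \emph{lower} bound on $\mathrm{Hess}_x\bigl(d_{\tilde g}(\cdot,y)\bigr)$ one needs an \emph{upper} bound on the sectional curvature, not a lower bound. With curvature bounded below by $-K_0^2$ (which is what compactness of $M$ gives you) the comparison theorem yields the reverse inequality $\mathrm{Hess}_x(d)(v,v)\le K_0\coth(K_0 r)|v_\perp|^2$. Moreover, the $\coth$-type lower bound you write down would require $K\le-K_0^2$, i.e.\ strictly negative curvature, which the paper explicitly does \emph{not} assume: the averaged vanishing condition \eqref{i.2} permits $K$ to vanish on a set of points. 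The correct comparison under the paper's standing hypothesis $K\le 0$ is against the flat model, giving
\begin{equation*}
\mathrm{Hess}_x\bigl(d_{\tilde g}(\cdot,y)\bigr)(v,v)\ \ge\ \frac{|v_\perp|^2}{d_{\tilde g}(x,y)},
\end{equation*}
so that
\begin{equation*}
\partial_t^2\phi(\alpha;t_0,s_0)\ \ge\ \frac{\sin^2\theta_{t_0}}{d_{\tilde g}(\tilde\gamma(t_0),\alpha(\tilde\gamma(s_0)))}\ \ge\ \frac{c(c_0)}{T},
\end{equation*}
which is $T^{-1}$, not a $T$-independent constant as you claim. Your conclusion that the proposition's $\exp(-CT)$ bound follows is still correct (indeed $T^{-1}\gg\exp(-CT)$), but the intermediate claims about a uniform lower bound and the stated comparison inequality should be corrected. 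The rest of the sketch — Jacobi field/Rauch bounds for \eqref{5.19} (where curvature bounded below is indeed the right hypothesis for an \emph{upper} bound on Jacobi fields), and the mixed-derivative bound \eqref{5.20} via convexity of geodesic spheres of radius $\ge 1$ — is consistent with the route taken in \cite{Gauss}.
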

		The proof of proposition \ref{prop5.2} is identical to that of  \cite[Proposition 5.2]{Gauss}, due to the fact that the two phase functions are only differed by a term which is  linear in both $s$ and $t$, so that  they have identical second order derivatives. The only notable  difference is that here we shall need $|\partial_t\phi(\alpha;t_0,s_0)|=|\cos \theta_t-\epsilon|$ to be sufficiently small, so that we can get a positive lower bound for $\sin\theta_t$, which is essential in the proof of \cite[Proposition 5.2]{Gauss}. We shall omit the proof here.

		\section{End of proof of Theorem \ref{main} and proof of Corollary \ref{co}}
		The last ingredient of our proof is the following stationary phase lemma  developed in \cite{Gauss}, which is a more flexible version of the standard stationary phase arguments. Note that as in \cite{Gauss} we are not seeking for the optimal power gain in the following lemma, since any power gain would suffice.

		\begin{lemma}[{\cite[Proposition 4.3]{Gauss}}]\label{stationary}  Suppose that $\phi\in C^\infty(\R)$ is real valued and that $a\in C^\infty_0({\mathcal I})$, where ${\mathcal I}\subset (-1/2,1/2)$ is an open interval.  Suppose that
			for some $0<\sigma<1/2$
			\begin{equation}\label{s.12}
			\la^{-\sigma/2}\le |\phi''(t)|, \quad t\in {\mathcal I}.
			\end{equation}
			Suppose further that for $0\le j\le N=\left \lceil{4\sigma^{-1}}\right \rceil$
			\begin{equation}\label{s.13}
			|\partial^j_ta(t)|\le C_j\la^{j/2}
			\end{equation}
			and that
			\begin{equation}\label{s.14}
			|\partial^j_t \phi'|\le \la^{\sigma/2}, \quad t\in{\mathcal I}.
			\end{equation}
			Then
			\begin{equation}\label{s.15}
			\Bigl|\, \int e^{i\la \phi(t)} \, a(t)\, dt \, \Bigr|\le C\la^{-1/2+2\sigma},
			\end{equation}
			where $C$ depends only on $\sigma$ and the above constants $C_j$, $j\le \left \lceil{4\sigma^{-1}}\right \rceil$.
		\end{lemma}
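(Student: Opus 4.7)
The plan is to argue by the standard decomposition into a stationary and a non-stationary region, exploiting that the lower bound $|\phi''|\ge \la^{-\sigma/2}$ forces $\phi''$ to maintain a definite sign on $\mathcal{I}$.  Consequently $\phi'$ is strictly monotone on $\mathcal{I}$ and admits at most one zero $t_0\in\overline{\mathcal{I}}$; if there is none I take $t_0$ to be an endpoint.  I would then fix the scale $\delta = \la^{-1/2+3\sigma/2}$, choose a standard cutoff $\chi_0\in C_0^\infty(\R)$ with $\chi_0\equiv 1$ on $[-1,1]$ and $\supp\chi_0\subset(-2,2)$, and set $\chi(t) = \chi_0((t-t_0)/\delta)$.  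Splitting $\int e^{i\la\phi}a\,dt = I_{\rm near}+I_{\rm far}$ accordingly, the near piece is bounded trivially:
\[
|I_{\rm near}| \le \|a\|_\infty\cdot 4\delta \le C\la^{-1/2+3\sigma/2}\le C\la^{-1/2+2\sigma}.
\]

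For $I_{\rm far}$, the key observation is that on $\supp(1-\chi)\cap\mathcal{I}$ we have $|t-t_0|\ge\delta$, so by the mean value theorem and \eqref{s.12}, $|\phi'(t)|\ge \la^{-\sigma/2}\cdot\delta = \la^{-1/2+\sigma}$.  I would then integrate by parts $N = \lceil 4\sigma^{-1}\rceil$ times via the operator $L^*g = -\partial_t(g/(i\la\phi'))$ dual to the reproducing operator $L = (i\la\phi')^{-1}\partial_t$, so that
\[
I_{\rm far} = \int e^{i\la\phi}(L^*)^N\!\bigl(a(1-\chi)\bigr)\,dt.
\]
A direct induction shows that $(L^*)^N g$ is a finite sum of terms of the form $\la^{-N}g^{(k)}(\phi')^{-(N+r)}\prod_{j=1}^{r}\phi^{(m_j)}$ with $k,r\ge 0$, $m_j\ge 2$, and the bookkeeping identity $k+\sum_j(m_j-1)=N$; since $m_j\ge 2$, this forces $k+r\le N$.

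Using the hypotheses $|g^{(k)}|\le C_k\la^{k/2}$ (which persist under the cutoff because $\delta^{-k} = \la^{(1/2-3\sigma/2)k}\le\la^{k/2}$), $|\phi^{(m)}|\le\la^{\sigma/2}$ for $2\le m\le N+1$ (from \eqref{s.14}), and the lower bound $|\phi'|\ge\la^{-1/2+\sigma}$ just established, a short exponent count shows each such term is bounded by $\la^{(k+r-N)/2-\sigma(N+r/2)}\le\la^{-\sigma N}$, where the last step uses $k+r\le N$.  Choosing $N=\lceil 4\sigma^{-1}\rceil$ gives $\|(L^*)^N(a(1-\chi))\|_\infty\le C\la^{-4}$, hence $|I_{\rm far}|\le C\la^{-4}$, far below the target $\la^{-1/2+2\sigma}$.

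The hard part will be the Leibniz bookkeeping for $(L^*)^N g$: one must verify uniformly across all terms that the accumulation of the factors $(\phi')^{-(N+r)}$, arising from repeatedly differentiating $(\phi')^{-1}$, is always compensated by the structural constraint $k+r\le N$ together with the $\la^{-N}$ prefactor.  The choice $\delta = \la^{-1/2+3\sigma/2}$ is precisely the balance point: any smaller, and the lower bound $|\phi'|\ge\la^{-1/2+\sigma}$ would fail on the far region so that each integration by parts ceases to gain; any larger, and the trivial estimate of $I_{\rm near}$ would already exceed $\la^{-1/2+2\sigma}$.
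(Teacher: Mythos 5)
Your argument is the standard Van der Corput--type proof that almost certainly mirrors the one in \cite{Gauss}: the sign condition forced by $|\phi''|\ge\la^{-\sigma/2}>0$ makes $\phi'$ strictly monotone, so one excises a scale-$\delta$ neighborhood of the (unique) near-critical point $t_0$ and integrates by parts $N$ times away from it, using that $|\phi'|\gtrsim\la^{-\sigma/2}\delta$ on the far region. The structural claim about $(L^*)^N g$ and the accompanying bookkeeping identity $k+\sum_j(m_j-1)=N$ (hence $k+r\le N$) are correct, and your exponent count does give each term of size $\le\la^{(k+r-N)/2-\sigma(N+r/2)}\le\la^{-\sigma N}\le\la^{-4}$ once $N=\lceil4\sigma^{-1}\rceil$. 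The cutoff derivatives are controlled because $\delta^{-1}=\la^{1/2-3\sigma/2}\le\la^{1/2}$, so hypothesis \eqref{s.13} indeed persists for $g=a(1-\chi)$. The near region contributes $O(\delta)=O(\la^{-1/2+3\sigma/2})$, which is in fact slightly better than the stated $\la^{-1/2+2\sigma}$; that is fine.

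One point needs tightening. When $\phi'$ has no zero on $\overline{\mathcal I}$ you say ``I take $t_0$ to be an endpoint,'' but the lower bound $|\phi'(t)|\ge\la^{-\sigma/2}|t-t_0|$ only holds if $t_0$ is the endpoint at which $|\phi'|$ attains its minimum over $\overline{\mathcal I}$ (equivalently, the point of $\overline{\mathcal I}$ closest to the zero of the monotone extension of $\phi'$). If instead one centers $\chi$ at the opposite endpoint, monotonicity only gives the \emph{upper} bound $|\phi'(t)|\le|\phi'(t_0)|-\la^{-\sigma/2}|t-t_0|$, and the far-region gain from integration by parts is lost. Replacing ``an endpoint'' with $t_0=\operatorname{argmin}_{\overline{\mathcal I}}|\phi'|$ treats both the zero and no-zero cases uniformly and closes this small gap; the rest of the argument then goes through as you wrote it.
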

	Finally, we are ready to finish the proof of Proposition \ref{prop2.1}, hence completing the proof of Theorem \ref{main}.
		\begin{proof}[Proof of Proposition \ref{prop2.1}]Now we are in a position to finish the proof of Proposition~\ref{prop2.1} and hence that of Theorem~\ref{main}.  We need to verify that we can fix
		$c=c_M>0$ so that for $\la\gg1$, \eqref{2.21} is valid for some $\sigma_M>0$.  We shall take 
		$$\sigma_M=\e_0/2$$
		where $\e_0=\frac{1}{5k}>0$ is as in \eqref{g.3} and \eqref{5.27}.

	Thus it suffices to show that for $\alpha\neq Id$, we have
		\begin{equation}\label{6.2}
		\la^{1/2}\Bigl| \, \iint b(t,s)a(T,\la; \tilde \gamma(t), \alpha(\tilde \gamma(s))) \, e^{ i \la\phi(\alpha;t,s)}\, dt ds\, \Bigr|
		\le C_{M,b}\la^{-\e_0/2},
		\end{equation}
		under the above assumptions with $\phi(\alpha;t,s)=\epsilon(s-t)+d_{\tilde g}(\tilde \gamma(t),\alpha(\tilde \gamma(s)))$ as in 
		Propositions \ref{prop5.2} and \ref{prop5.3}. 
		
		Therefore, by \eqref{5.4}, \eqref{5.5} and \eqref{5.19} if $T=c\log\la$ with $c=c_M>0$ sufficiently small and if $(\tilde\gamma(t),\alpha(\tilde \gamma(s))$ is in the support of $K_{T,\la}$, we have
		\begin{multline}\label{6.3}
		|\partial^j_t\phi(\alpha;s,t)|+|\partial^j_s\phi(\alpha;s,t)|+|\partial_t^ja(T,\la; \tilde \gamma(t),\alpha(\tilde \gamma(s)))|
		+|\partial_s^ja(T,\la; \tilde \gamma(t),\alpha(\tilde \gamma(s)))|
		\\
		\le \la^{\e_0/8}, \quad \text{with } \, \, \, 1 \le j\le \left \lceil{8\e_0^{-1}}\right \rceil.
		\end{multline}
		We use $\e_0/8$ here since we intend to later apply Lemma~\ref{stationary} with $\sigma=\e_0/4$.
		
		To apply Proposition~\ref{prop5.3}, \ref{prop5.2} and the stationary phase lemma, we shall consider two cases:
		\begin{equation}\label{6.4}
		|\nabla_{t,s}\phi(\alpha;s,t)|\ge \frac12 \la^{-1/3}, \quad |t|, \, |s| <1/2,
		\end{equation}
		and the complementary case where
		\begin{equation}\label{6.5}
		|\nabla_{t,s}\phi(\alpha;t_0,s_0)|\le \frac12 \la^{-1/3} \quad \text{for some } \, \, (t_0,s_0)\in (-1/2)\times (-1/2).
		\end{equation}
		
		To show that \eqref{6.2} is valid under the assumption \eqref{6.4} we shall use a partition of unity argument to exploit \eqref{6.3}.  Specifically,
		choose $\rho\in C^\infty_0((-1,1))$ satisfying 
		$$\sum_{j=-\infty}^\infty \rho(t-j)\equiv 1, \quad t\in \R.$$
		Then for $m=(m_1,m_2)\in {\mathbb Z}^2$ set
		$$\rho_m(t,s)=\rho(\la^{1/2}t-m_1) \, \rho(\la^{1/2}s-m_2).$$
		It follows that $\sum_{m\in {\mathbb Z}^2}\rho_m(t,s)\equiv 1$ and that $|\partial_t^j\rho_m| +|\partial_s^j\rho_m|\le C_j\la^{j/2}$.  Also, $\rho_m$
		is supported in a $O(\la^{-1/2})$ size neighborhood about $(t_m,s_m)=(\la^{-1/2}m_1,\la^{-1/2}m_2)$.  Assuming that this neighborhood intersects
		$(-1/2,1/2)\times (-1/2,1/2)$ and that $(\overline{t_m},\overline{s_m})$ is in the intersection, by \eqref{6.4} we must have that
		$$|\partial_t\phi(\alpha;\overline{t_m},\overline{s_m})|\ge \frac14\la^{-1/3}, 
		\, \, \, \text{or } \, \, |\partial_s\phi(\alpha;\overline{t_m},\overline{s_m})|\ge \frac14\la^{-1/3}.$$
		Let us assume the former since the argument for the latter is similar.  By \eqref{5.20} and \eqref{6.3} we must have that
		$$|\partial_t\phi|\ge \frac14\la^{-1/3}-O(\la^{-1/2}\la^{\e_0/4})-O(\la^{-1/2})\ge \frac18\la^{-1/3} \quad
		\text{on  supp }\alpha_m$$
		for large $\la$ since $\e_0<1/10$.  Therefore, by a simple integration by parts argument, we have 
			$$\la^{1/2}\Bigl| \, \int b(t,s)\rho_m(t,s)a(T,\la; \tilde \gamma(t), \alpha(\tilde \gamma(s))) \, e^{ i\la \phi(\alpha;t,s)} \, dt \, \Bigr|
		\le C_{b,\e_0}\la^{-3/2}.$$
		This is because every time we do an integration by parts in the $t$ variable, we gain a factor of $\la^{-\frac16}$.
		
		Since $\rho_m(t,s)=0$ if $|s-s_m|\ge C\la^{-1/2}$, this in turn gives the bounds
		$$\la^{1/2}\Bigl| \, \iint b(t,s)\rho_m(t,s)a(T,\la; \tilde \gamma(t), \alpha(\tilde \gamma(s))) \, e^{ i\la \phi(\alpha;t,s)} \, dtds \, \Bigr|
		\le C_{b,\e_0}\la^{-2}.$$
		Since there are $O(\la)$ such terms which are nonzero, we conclude that when \eqref{6.4} holds we obtain a stronger version of 
		\eqref{6.2} where $\la^{-\e_0/2}$ could be replaced by $\la^{-1}$.
		
		Now let us assume \eqref{6.5}. Then it suffices to show that \eqref{6.2} is valid in this case too.  To achieve this, we shall use
		Proposition~\ref{prop5.3}, which makes use of our curvature assumption \eqref{i.2}. Let $\beta\in C^\infty_0(\R)$ be as in \S 2, i.e.,
		$$\beta(t)=1 \, \, \text{for } \, \, |t|\le3 \, \, \, \text{and } \, \, \, \beta(t)=0 \, \, \, \text{for } \, \, |t|\ge4.$$
			Since by \eqref{5.27} we have,
		$$|\nabla_{t,s}\phi(\alpha;t,s)|\ge \frac12\la^{-1/4}, \quad 
		\text{if } \, \, \bigl(1-\beta(\la^{\e_0}|t-t_0|)\beta(\la^{\e_0}|s-s_0|)\bigr)\ne 0,$$
		then integration by parts yields
		\begin{multline*}
		\la^{1/2}\Bigl|\, \iint \bigl(1-\beta(\la^{\e_0}|t-t_0|)\beta(\la^{\e_0}|s-s_0|)\bigr) \, a(T,\la; \tilde \gamma(t), \alpha(\tilde \gamma(s)))
		\, e^{ i\la\phi(\alpha;t,s)} \, dt ds \, \Bigr|
		\\
		\le C_{b,\e_0}\la^{-1},
		\end{multline*}

		Thus, the proof of our main theorem would be complete if we could show that
		\begin{multline}\label{6.6}
		\la^{1/2}\Bigl|  \iint  \beta(\la^{\e_0}|t-t_0|)\beta(\la^{\e_0}|s-s_0|) \, b(t,s) \, a(T,\la; \tilde \gamma(t), \alpha(\tilde \gamma(s))) \,
		e^{ i \la\phi(\alpha:t,s)} \, dt ds \Bigr| 
		\\
		\le C_{M,b}\la^{-\e_0/2}.
		\end{multline}
		To do this, we note that, by \eqref{5.21}, if $T=c\log\la$ with $c=c_M>0$ sufficiently small, we have
		$$|\partial_t^2\phi(\alpha;t,s)|\ge \la^{-\e_0/8} \quad \text{if } \, \, 
		\beta(\la^{\e_0}|t-t_0|)\beta(\la^{\e_0}|s-s_0|)\ne 0,$$
		since our assumption \eqref{6.5} along with \eqref{6.3} and \eqref{5.20} ensures that \eqref{5.22} is valid for such $(t,s)$.
		Therefore, applying Lemma~\ref{stationary} with $\sigma=\e_0/4$, we have that for each $s\in[-1/2,1/2]$,
		\begin{multline}\label{last}
		\la^{1/2}\Bigl|  \int  \beta(\la^{\e_0}|t-t_0|)\beta(\la^{\e_0}|s-s_0|) \, b(t,s) \, a(T,\la; \tilde \gamma(t), \alpha(\tilde \gamma(s))) 
		e^{ i \la \phi(\alpha:t,s)} \, dt   \Bigr| 
		\\
		\le C_{M,b}\la^{\e_0/2}.
		\end{multline}
		Since the $s$-support of $ \beta(\la^{\e_0}|s-s_0|)$ has size about $\la^{-\e_0}$, \eqref{last} implies \eqref{6.6}, which completes
		the proof.  
		\end{proof}
		Finally, we give a simple proof to Corollary \ref{co}.
\begin{proof}[Proof of Corollary \ref{co}.]
	For the sake of simplicity, we may assume $|\gamma|=2\pi$, and the $L^2(\gamma)$ norm of $e_{\la_j}$ is bounded by 1. Let $g\in L^2(\gamma),$ it then suffices to show that given any $\eps>0$,  for large enough $j$, we have
	\[\left|\int_\gamma e_{\la_j}(\gamma(s))\,g(s)\,ds\right|\le \eps,\]
	Now we write $g$ in terms of its Fourier series,
	\[g(s)=\sum_ka_ke^{iks}.\]
	$g\in L^2(\gamma)$ implies that there exists some $N>0$, such that
	\[\sum_{|k|>N}|a_k|^2\le\frac14\eps^2.\]
	If we let
		\[b_{j,k}=\left|\int_\gamma e_{\la_j}\,e^{iks}\,ds\right|,\]
	then by Theorem \ref{main}, 
	\[b_{j,k}\le C\frac{1}{(\log\la_j)^\frac12},\]
	here $C$ will be uniform provided that $|k|\le\frac12\la_j.$
	
	Thus	
	\[\left|\int_\gamma e_{\la_j}\,g\,ds\right|\le\sum_{|k|\le N}|a_k||b_{k,j}|+\Big[\sum_{|k|>N}|a_k|^2\Big]^\frac12\|e_{\la_j}\|_{L^2(\gamma)},\]
	while the first sum on the right hand side is bounded by
	
	\[C{(\log\la_j)^{-\frac12}}\sum_{|k|\le N}|a_k|\le C(\log\la_j)^{-\frac12}N^\frac12[\sum_{|k|\le N}|a_k|^2]^\frac12.\]
	
	Thus, if $\log{\la_j}\ge[{4\eps^{-2}NC^2\|g\|^2_{L^2(\gamma)}}]+1,$ we have
	
	\[\left|\int_\gamma e_{\la_j}\,g\,ds\right|\le \eps.\]
	This choice of $j$ is valid, due to the fact that $N/\la_j$ would be comparable to $\eps^2\la_j^{-1}\log\la_j\ll 1/2$, which makes the constant $C$ uniform.
\end{proof}
		\bibliography{EF}{}
		\bibliographystyle{alpha}
	\end{document}